\documentclass[a4paper,reqno,
11pt]{amsart}
\setlength{\textheight}{23cm}\setlength{\textwidth}{16cm}\setlength{\oddsidemargin}{0cm}\setlength{\evensidemargin}{0cm}\setlength{\topmargin}{0mm}

\usepackage{
amssymb,
amsmath,
amsthm,
eucal,
empheq,
cases,
dsfont,
multicol,
mathrsfs,
tikz,
graphicx,
hyperref,
esvect
}

\usepackage{lipsum}
\makeatletter
\renewcommand*{\eqref}[1]{%
\hyperref[{#1}]{\textup{\tagform@{\!\!\ref*{#1}}}}%
}

\makeatletter
 
  \@addtoreset{equation}{section}
 \makeatother
\makeatletter

\newcommand{\opnorm}{\@ifstar\@opnorms\@opnorm}
\newcommand{\@opnorms}[1]{%
  \left|\mkern-1.5mu\left|\mkern-1.5mu\left|
   #1
  \right|\mkern-1.5mu\right|\mkern-1.5mu\right|
}
\newcommand{\@opnorm}[2][]{%
  \mathopen{#1|\mkern-1.5mu#1|\mkern-1.5mu#1|}
  #2
  \mathclose{#1|\mkern-1.5mu#1|\mkern-1.5mu#1|}
}

\makeatother\theoremstyle{plain}
\newtheorem{theorem}{Theorem}[section]
\newtheorem{lemma}[theorem]{Lemma}
\newtheorem{proposition}[theorem]{Proposition}
\newtheorem{corollary}[theorem]{Corollary}
\theoremstyle{definition}

\newtheorem{remark}[theorem]{Remark}




\newcommand{\bvec}[1]{\mbox{\boldmath $#1$}}

\def\Re{\mathop{\mathrm{Re}}\nolimits}
\def\Im{\mathop{\mathrm{Im}}\nolimits}

\def\diag{\mathop{\mathrm{diag}}\nolimits}

\def\R{{\mathbb{R}}}

\def\C{{\mathbb{C}}}

\def\F{{\mathcal{F}}}
\def\H{{\mathcal{H}}}

\def\L{{\mathcal{L}}}
\def\X{{\mathcal{X}}}

\def\<{{\langle}}
\def\>{{\rangle}}

\def\ep{{\varepsilon}}


\title[Large data modified wave operators for 1D cubic NLS]{Modified wave operators for the defocusing cubic nonlinear Schr\"odinger equation in one space dimension with large scattering data}

\author{Masaki Kawamoto}
\address[M. Kawamoto]{Research Institute for Interdisciplinary Science, Okayama University, 3-1-1, Tsushimanaka, Kita-ku, Okayama City, Okayama, 700-8530, Japan}
\email{kawamoto.masaki@okayama-u.ac.jp}
\author{Haruya Mizutani}
\address[H. Mizutani]{Department of Mathematics, Graduate School of Science, The University of Osaka, Toyonaka, Osaka 560-0043, Japan}
\email{haruya@math.sci.osaka-u.ac.jp}

\keywords{1D cubic NLS, modified wave operator, modified scattering, large data problem}
\makeatletter
\@namedef{subjclassname@2020}{%
	\textup{2020} Mathematics Subject Classification}
\makeatother

\subjclass[2020]{Primary: 35Q55; Secondary: 35B40, 35P25}

\begin{document}

%
\begin{abstract}
In the present paper, we construct modified wave operators for the defocusing cubic nonlinear Schr\"odinger equation (NLS) in one space dimension without size restriction on scattering data. In the proof, we introduce a new formulation of the problem based on the linearization of the NLS around a prescribed asymptotic profile. For the linearized equation which is a system of Schr\"odinger equations with non-symmetric, time-dependent long-range potentials, we show a modified energy identity, as well as an associated energy estimate, which allow us to apply a simple energy method to construct the modified wave operators. As a byproduct, we also obtain in the focusing case an improved explicit upper bound for the size of scattering data to ensure the existence of modified wave operators. Our argument relies neither on the complete integrability nor on the framework of analytic function spaces, and also works for short-range perturbations of the cubic nonlinearity. 
\end{abstract}

\maketitle

\section{Introduction}
\subsection{Introduction}
\label{introduction}

In this paper we are interested in scattering theory for the following nonlinear Schr\"odinger equation (NLS) in one space dimension: 
\begin{align}
\label{NLS}
i\partial_t u-H_0u=\lambda_1 |u|^2u+\lambda_2|u|^{2\sigma}u,\quad x\in \R,\quad t\in \R,
\end{align}
where $u=u(t,x)$ is a $\C$-valued unknown function, $\lambda_1,\lambda_2\in \R$, $1<\sigma<2$ and
$$
H_0=-\frac{1}{2}\frac{d^2}{dx^2}. 
$$
The cubic nonlinearity is critical in the context of scattering theory in the sense that if $\lambda_1\neq0$, then no non-trivial solution to \eqref{NLS} scatters to a solution to the free Schr\"odinger equation regardless of the defocusing case $\lambda_1>0$ or the focusing case $\lambda_1<0$  (see \cite{Strauss,Barab,Cazenave}). Instead, appropriate modifications of asymptotic profiles depending on the cubic nonlinearity must be taken into account to establish the asymptotic behavior of the solutions even for small solutions.

The main result in this paper is the modified scattering for the final state problem (FSP) and existence of modified wave operators for {\it arbitrarily large scattering data} provided $\lambda_1>0$, {\it i.e.}, the cubic nonlinearity is defocusing. Specifically, we define the asymptotic profiles $u_{\mathrm p,\pm}$ by
\begin{align}
\label{u_p}
u_{\mathrm{p},\pm}(t,x)=[\mathcal M(t) \mathcal D(t) w_{\mathrm{p},\pm}](t,x)=(it)^{-1/2}e^{i|x|^2/(2t)}e^{\mp i\lambda_1 |\widehat{u_\pm}(x/t)|^2\log |t|}\widehat{u_\pm}(x/t),
\end{align}
where $u_\pm$ are given scattering data (also called scattering states, or final data) and
\begin{align*}
\widehat f(\xi)&=\F f(\xi)=\frac{1}{\sqrt{2\pi}}\int_{\R} e^{-ix\xi}f(x)dx,\\
\mathcal M(t)f(x)&=e^{i|x|^2/(2t)}f(x),\\
 \mathcal D(t)f(x)&=(it)^{-1/2}f(x/t),\\
\nonumber
w_{\mathrm{p},\pm}(t,x)&=e^{\mp i\lambda_1 |\widehat{u_\pm}(x)|^2\log |t|}\widehat{u_\pm}(x). 
\end{align*}
Recall that the free propagator $e^{-itH_0}$ satisfies the Dollard decomposition
\begin{align}
\label{Dollard_1}
e^{-itH_0}=\mathcal M(t)\mathcal D(t)\mathcal F\mathcal M(t). 
\end{align}
Since $e^{i|x|^2/(2t)}\to1$ as $t\to \infty$ for all $x\in \R$, we know
\begin{align*}
e^{-itH_0}u_\pm =\mathcal M(t)\mathcal D(t)\widehat{u_\pm}+\mathcal M(t)\mathcal D(t)\mathcal F\left(\mathcal M(t)-I\right)u_\pm=\mathcal M(t)\mathcal D(t)\widehat{u_\pm}+o(1)
\end{align*}
in $L^2(\R)$ as $t\to \pm\infty$. The asymptotic profile $u_{\mathrm p,\pm}$ thus has the additional phase correction term $e^{\mp i\lambda_1 |\widehat{u_\pm}(x/t)|^2\log |t|}$ compared with this leading term $\mathcal M(t)\mathcal D(t)\widehat{u_\pm}$ of the free solution $e^{-itH_0}u_\pm$. 

Then, by the modified scattering for the FSP, we mean that for any scattering datum $u_+$ (resp. $u_-$), there exists a unique global solution $u$ to \eqref{NLS} which scatters to the prescribed asymptotic profile $u_{\mathrm p,+}$ (resp. $u_{\mathrm p,-}$) in the sense that 
\begin{align}
\label{intro_1}
\|u(t)-u_{\mathrm p.+}(t)\|_X\to 0\quad(\text{resp. }\|u(t)-u_{\mathrm p.-}(t)\|_X\to 0)
\end{align}
as $t\to \infty$ (resp. $t\to -\infty$) in a suitable function space $X$. This statement particularly ensures the existence of the modified wave operators $$
W_\pm:u_\pm\mapsto u(0), 
$$
which is one of main steps to construct the modified scattering operator 
$S:u_-\mapsto u_+$. The (modified) scattering operator is an important object in scattering theory to describe the correspondence between the future and past asymptotic behaviors of the solutions to \eqref{NLS}.   


The modified scattering has been extensively studied for both the Cauchy problem (CP) and FSP of \eqref{NLS}, or more generally, of the following NLS
\begin{align}
\label{NLS_1}
i\partial_t u+\frac12\Delta u=\lambda_1|u|^{2/d}u+\lambda_2 |u|^{2\sigma}u,\quad x\in \R^d,\quad t\ge0,\quad \lambda_1,\lambda_2\in \R,\quad \sigma>1/d. 
\end{align}
The modified scattering for the FSP and the existence of wave operators were first established by Ozawa's seminal paper \cite{Ozawa_1991} in the one-dimensional cubic case, and then extended by \cite{Ginibre_Ozawa_1993} to the two and three dimensional cases. The condition on the scattering data $u_\pm$, as well as the topology $X$ of the convergence \eqref{intro_1} were later improved by \cite{Hayashi_Naumkin_2006}. Precisely, it was shown in \cite{Hayashi_Naumkin_2006} that, for $1\le d\le3$, $\lambda_1\in \R$, $\lambda_2=0$, $d/2<\alpha<\min\{d,2,1+2/d\}$, $d/2<\beta<\alpha$ and sufficiently small $\ep>0$, the modified scattering for the FSP in $F H^\beta(\R)$ holds for all $u_\pm\in \mathcal FH^\alpha(\R) $ satisfying $\|\widehat{u_\pm}\|_{L^\infty}<\ep$. In particular, the modified wave operators
$$
W_\pm:\{f\in \mathcal FH^{\alpha}(\R^d)\ |\ \|\widehat f\|_{L^\infty}<\ep\}\ni u_\pm\mapsto u(0)\in \mathcal FH^\beta(\R^d)
$$ 
are well defined. The modified scattering for the CP of \eqref{NLS_1} was established by \cite{Hayashi_Naumkin_1998,Hayashi_Naumkin_2006} for $1\le d\le 3$. In \cite{KaPu,Lindblad_Soffer_2006,IfTa}, the authors provided alternative methods to establish the modified scattering for the CP in one space dimension $d=1$. We also refer to \cite{Carles_2001,Hayashi_Naumkin_2006,Carles_2024} for the construction and its properties of the modified scattering operator. 


The aforementioned papers, except for \cite{Carles_2001}, have addressed only the case with sufficiently small data, and worked in a framework based on standard weighted $L^2$ or weighted Sobolev spaces. In \cite{Carles_2001}, an upper bound of $\|\widehat{u_\pm}\|_{L^\infty}$ to ensure the modified scattering for the FSP of \eqref{NLS} was obtained. There are also several results on the large data problem based on a special feature of the equation or for suitable well-designed given data. The large data modified scattering was established by  \cite{Deift_Zhou_2003} for the CP of \eqref{NLS} in the defocusing cubic case $\lambda_1>0$ and $\lambda_2=0$ via the complete integrability of \eqref{NLS} (with $\lambda_2=0$) and inverse scattering theory (see also \cite{Deift_Zhou_2002} for the case with sufficiently small $\lambda_2>0$), and by  \cite{Ginibre_Velo_2001} for the FSP of \eqref{NLS} with $\lambda_1\in \R$ and $\lambda_2=0$ using the framework of a suitable analytic function space as the energy space. The authors of \cite{Cazenave_Naumkin_2018} utilized the non-vanishing condition $\inf_{x\in \R^d}(\<x\>^N|u_0|)>0$ with some large $N>0$ for the initial data $u_0$ to establish the modified scattering for the CP of \eqref{NLS_1} with any $d\ge1$, $\lambda_1\in \R$ and $\lambda_2=0$, where they considered arbitrarily large, but highly oscillating initial data of the form $e^{ib|x|^2}u_0$ with  large $b$.

In summary, although the small data case has been relatively well understood, the literature of the large data modified scattering for NLS is much more sparse. In particular, to the best of our knowledge, there seems to be no previous result on the large data modified scattering for the FSP of \eqref{NLS} in the framework of non-analytic function spaces, which we prove in this paper. We hope that the method of this paper will serve as a starting point for the analysis of the modified scattering with large data for more general non-integrable nonlinear dispersive equations (see Remark \ref{remark_future} below for some future topics).

\begin{remark}
It should be mentioned that the modified scattering has been also extensively studied for the long-range nonlinear Hartree equations in space dimensions $d\ge2$ of the form 
\begin{align}
\label{Hartree}
i\partial_t u+\frac12\Delta u=\lambda (|x|^{-\sigma}*|u|^2)u,\quad x\in \R^d,\quad t\ge0,\quad 0<\sigma\le1
\end{align}
(see e.g. \cite{Ginibre_Ozawa_1993,Ginibre_Velo_2000_1,Ginibre_Velo_2000_2,Ginibre_Velo_2001,Ginibre_Velo_2014,Ginibre_Velo_2015,Hayashi_Naumkin_2001,Nakanishi_CPAA,Nakanishi_AHP}). In particular, the modified scattering for the FSP was established by  Ginibre--Velo \cite{Ginibre_Velo_2000_1,Ginibre_Velo_2000_2,Ginibre_Velo_2001,Ginibre_Velo_2014,Ginibre_Velo_2015} and Nakanishi \cite{Nakanishi_CPAA,Nakanishi_AHP} without size restriction. However, it is unclear  whether their methods apply to the NLS with power nonlinearities, as the smoothing property of the convolution plays an essential role (see Section \ref{section_Hartree} for more details). 
\end{remark}

\subsection{Main result}
We shall deal with the modified scattering for the positive time direction $t\to \infty$ only, since the argument for the negative time is analogous thanks to the time reversal symmetry of \eqref{NLS}. In what follows, we denote for simplicity
$$
\varphi=\widehat{u_+},\quad u_{\mathrm p}=u_{\mathrm p,+},\quad w_{\mathrm p}=w_{\mathrm p,+}. 
$$
 For a technical reason (see Remark \ref{remark_theorem_1} (1) below), following \cite{Ozawa_1991}, we introduce the following  another asymptotic profile $\widetilde u_{\mathrm p}$ depending also on the short-range part of the nonlinearity: 
\begin{align}
\label{widetilde_u_p}
\widetilde u_{\mathrm p}=\mathcal M(t)\mathcal D(t)\widetilde w_{\mathrm p},\quad \widetilde w_{\mathrm p}(t,x)=e^{-i\lambda_1 |\varphi(x)|^2\log |t|-i\frac{\lambda_2|\varphi(x)|^{2\sigma}}{1-\sigma} t^{1-\sigma}}\varphi(x),
\end{align}
where $\widetilde w_{\mathrm p}$ satisfies $|\widetilde w_{\mathrm p}(t,x)|=|\varphi(x)|$ and 
\begin{align}
\label{widetilde_w_p}
i\partial_t \widetilde w_{\mathrm p}=\lambda_1 t^{-1}|\widetilde w_{\mathrm p}|^{2}\widetilde w_{\mathrm p}+\lambda_2 t^{\sigma-2}|\widetilde w_{\mathrm p}|^{2\sigma}\widetilde w_{\mathrm p},\quad t>0,\ x\in \R. 
\end{align}


Now we state the main result. 
\begin{theorem}
\label{theorem_1} Let  $\lambda_1>0$, $\lambda_2\in \R$ and $1<\sigma<2$. Suppose $\varphi\in H^{1+\ep}(\R)$ with some $\ep>0$, $2/3\le \alpha<1$ and $0<\beta<\min\{\ep/2,1/2\}$. 
Then, there exists a global solution $u\in C(\R;L^2(\R))$ to \eqref{NLS} satisfying $e^{itH_0}u\in C(\R;\mathcal F H^1(\R))$ and the prescribed asymptotic condition as $t\to \infty$: 
\begin{align}
\label{theorem_1_1}
\|xe^{itH_0}\{u(t)-\widetilde u_{\mathrm p}(t)\}\|_{L^2}+t^{\frac \alpha 2}\|u(t)-\widetilde u_{\mathrm p}(t)\|_{L^2}\lesssim t^{-\beta},
\end{align}
where the solution is unique in the following sense: if $u_1,u_2\in C(\R;L^2(\R))$ are two solutions to \eqref{NLS} such that $e^{itH_0}u_j\in C(\R;\mathcal F H^1(\R))$ and \eqref{theorem_1_1} hold with some $2/3\le \alpha_j<1$ and $\beta_j>0$ for $j=1,2$, respectively, then $u_1\equiv u_2$. Moreover, we have the following statements:
\begin{itemize}
\item For any $\gamma<\min\{\beta,\sigma-1\}$, the solution $u$ satisfies
\begin{align}
\label{theorem_1_2}
\|\<x\>e^{itH_0}\{u(t)- u_{\mathrm p}(t)\}\|_{L^2}\lesssim t^{-\gamma},\quad t\to \infty, 
\end{align}
where $\<x\>=\sqrt{1+|x|^2}$. 
\item The modified wave operator $W_+:\F H^{1+\ep}(\R)\ni \F^{-1}\varphi\mapsto u(0)\in \mathcal F H^1(\R)$ 
is well-defined. 
\end{itemize}
The analogous result also holds for the negative time direction $t\to -\infty$. 
\end{theorem}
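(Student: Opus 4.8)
The plan is to treat the final-state problem by constructing the solution on a terminal interval $[T,\infty)$ with $T$ large, controlling the remainder $v=u-\widetilde u_{\mathrm p}$ by an energy method, and then extending globally. First I would substitute $u=\widetilde u_{\mathrm p}+v$ into \eqref{NLS}. Since $\widetilde u_{\mathrm p}=\mathcal M(t)\mathcal D(t)\widetilde w_{\mathrm p}$ and $\widetilde w_{\mathrm p}$ solves the profile ODE \eqref{widetilde_w_p}, the conjugation identity $(i\partial_t-H_0)\mathcal M(t)\mathcal D(t)g=\mathcal M(t)\mathcal D(t)\big(i\partial_t g+\tfrac{1}{2t^2}\partial_x^2 g\big)$ shows that $\widetilde u_{\mathrm p}$ solves \eqref{NLS} exactly up to the WKB error $R=\tfrac{1}{2t^2}\mathcal M(t)\mathcal D(t)\partial_x^2\widetilde w_{\mathrm p}$. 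Linearizing the nonlinearity about $\widetilde u_{\mathrm p}$ then turns the equation for $v$ into a non-symmetric Schr\"odinger system
$$i\partial_t v-H_0 v=Vv+\widetilde V\,\bar v+N(v)+R,$$
where $N(v)$ collects the terms at least quadratic in $v$, and the borderline long-range part of $V,\widetilde V$ comes from the cubic term and has size $t^{-1}|\varphi(x/t)|^2$ (the $\lambda_2$-contribution decays at the faster integrable rate $t^{-\sigma}$, $\sigma>1$). The $\bar v$-coupling through $\widetilde V$ is what makes the system non-symmetric, and the sign $\lambda_1>0$ will be decisive below. Using $\varphi\in H^{1+\ep}(\R)$ I would control the contribution of $R$ to the energy by an integrable-in-time quantity; tracking the loss coming from the one remaining derivative on $\widetilde w_{\mathrm p}$ and the $\log|t|$ in its phase is what fixes the admissible range $0<\beta<\min\{\ep/2,1/2\}$.

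The core of the argument is the energy estimate. I would work with the functional built from the three quantities in \eqref{theorem_1_1}, schematically $E(t)=\|xe^{itH_0}v\|_{L^2}^2+t^{\alpha}\|v\|_{L^2}^2+t\,\|\varphi(\cdot/t)v\|_{L^2}^2$, and differentiate it in $t$. Since $xe^{itH_0}=e^{itH_0}J(t)$ with $J(t)=e^{-itH_0}xe^{itH_0}$ commuting with $i\partial_t-H_0$, the time derivative falls on the potentials, on the commutators $[J,V],[J,\widetilde V]$, and on the weight $\varphi(\cdot/t)^2$. The self-adjoint operator $H_0$ and the real part of $V$ do not contribute to the growth, and the quadratic form of the long-range potential is dominated by $\|\varphi(\cdot/t)v\|_{L^2}^2$, i.e. by the third term of $E$, which after the weight $t$ amounts to an integrable $t^{-2}$ contribution. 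The genuinely marginal terms are (i) the anti-symmetric coupling $\Im\langle \widetilde V\bar v,v\rangle$, and (ii) the commutator $[H_0,\varphi(\cdot/t)^2]$ arising in the third quantity, both of which a priori produce non-time-integrable $t^{-1}$ contributions. The key step is a modified energy identity: one adds to $E$ correction terms built from $|\widetilde u_{\mathrm p}|^2$ so that these marginal terms are either cancelled or, using $\lambda_1>0$, acquire a favorable sign. This is precisely the mechanism that makes the resulting differential inequality independent of the size of $\varphi$, and hence permits arbitrarily large data.

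Granting a modified energy inequality of the form $\tfrac{d}{dt}\widetilde E\lesssim t^{-1-\delta}\widetilde E+(\text{integrable source})$, I would close a continuity/bootstrap argument to obtain the a priori bound $E(t)^{1/2}\lesssim t^{-\beta}$ matching \eqref{theorem_1_1}. For existence, I would solve for each large $S$ the Cauchy problem for \eqref{NLS} with terminal data $u_S(S)=\widetilde u_{\mathrm p}(S)$; global well-posedness in the class $e^{itH_0}u\in\mathcal F H^1(\R)$ is available because $1<\sigma<2$ keeps every nonlinear power energy-subcritical in one dimension, for both signs of $\lambda_2$. The modified energy gives bounds on $u_S-\widetilde u_{\mathrm p}$ uniform in $S$ on $[T_0,S]$, and a compactness argument lets me pass to the limit $S\to\infty$ to produce a solution on $[T_0,\infty)$ satisfying \eqref{theorem_1_1}; a final application of global well-posedness extends it to all of $\R$. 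Uniqueness in the stated sense follows by applying the same energy identity to the difference $w=u_1-u_2$: since both remainders decay, the potentials in the equation for $w$ obey the same integrable bounds, and the estimate integrated backward from $t=\infty$ forces $w\equiv0$.

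Finally, for the two remaining assertions: the difference $\widetilde u_{\mathrm p}-u_{\mathrm p}$ is governed solely by the short-range phase $e^{-i\lambda_2|\varphi(x)|^{2\sigma}t^{1-\sigma}/(1-\sigma)}$ appearing in \eqref{widetilde_u_p}, and since $\sigma>1$ one has $\|\<x\>e^{itH_0}(\widetilde u_{\mathrm p}-u_{\mathrm p})\|_{L^2}\lesssim t^{-(\sigma-1)}$; combined with \eqref{theorem_1_1} via the triangle inequality this yields \eqref{theorem_1_2} for any $\gamma<\min\{\beta,\sigma-1\}$ under the hypothesis $\alpha/2<\sigma-1$. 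Well-definedness of $W_+:\F^{-1}\varphi\mapsto u(0)$ into $\mathcal F H^1(\R)$ is then immediate from $e^{itH_0}u\in C(\R;\mathcal F H^1)$, which is the content of the first two terms of \eqref{theorem_1_1} together with the corresponding regularity of $\widetilde u_{\mathrm p}$. I expect the main obstacle to be the construction of the modified energy: pinpointing the correction terms that, in concert with the defocusing sign $\lambda_1>0$, convert the non-symmetric, non-integrable long-range interaction into a closed, size-independent differential inequality is the heart of the matter, with the rest being comparatively routine once that estimate is secured.
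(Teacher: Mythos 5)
Your skeleton coincides with the paper's: linearizing \eqref{NLS} around the profile yields a non-self-adjoint system with a long-range $O(t^{-1})$ matrix potential, and the theorem is closed by a modified energy whose correction terms exploit the defocusing sign; indeed your functional $E$ is, up to the pseudo-conformal inversion $t\mapsto 1/t$ (under which $\|xe^{itH_0}(u-\widetilde u_{\mathrm p})\|=\|\partial_x v_*\|$), essentially the paper's $Q$ from \eqref{Q_1}, and your terminal-data-plus-compactness construction is an acceptable substitute for the paper's contraction argument for \eqref{integral_equation}. However, there are two genuine gaps. First, your error term $R=\tfrac{1}{2t^2}\mathcal M(t)\mathcal D(t)\partial_x^2\widetilde w_{\mathrm p}$ does not exist in $L^2$ under the standing hypothesis $\varphi\in H^{1+\ep}$ with $\ep<1$: the expression $\partial_x^2\widetilde w_{\mathrm p}$ costs two derivatives of $\varphi$ (plus logarithmic losses from the phase in \eqref{widetilde_u_p}), so the differential inequality with source $R$ cannot even be written, let alone integrated, and ``tracking the loss from one remaining derivative'' cannot start from an expression requiring two. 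The paper never forms a pointwise-in-time WKB error; it keeps the profile mismatch in the mollified Duhamel form $e_1=(e^{-itH_0}-I)v_{\mathrm p}$ and uses the fractional smoothing bound $\|(e^{-itH_0}-I)f\|_{H^s}\lesssim t^{\delta}\|f\|_{H^{s+2\delta}}$ (Lemma \ref{lemma_R}, Proposition \ref{proposition_e_1}), which is exactly where the threshold $\beta<\ep/2$ originates. Your plan needs this device (or a regularization of $\varphi$) in place of $R$.

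Second, and more seriously, the heart of the theorem --- the size-independent energy inequality --- is only postulated in your write-up (``granting a modified energy inequality\dots''), and the functional you actually display would not deliver it. Adding $t\,\|\varphi(\cdot/t)v\|^2$ alone does not cancel the antisymmetric coupling $\Im\langle \widetilde V\bar v,v\rangle$: the paper's correction is $D_{\sigma_j}[\psi]=\||\varphi|^{\sigma_j}\psi\|^2+\sigma_j\||\varphi|^{\sigma_j-1}\Re[\overline{v_{\mathrm p}}\psi]\|^2$, and the phase-dependent component $\Re[\overline{v_{\mathrm p}}\psi]$ with the precise coefficient $\sigma_j$ is indispensable. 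Moreover, differentiating that component regenerates cross terms of the form $\langle|\varphi|^{2(\sigma_j+\sigma_k)-2}\Re[\overline{v_{\mathrm p}}\psi],\Im[\overline{v_{\mathrm p}}\psi]\rangle$, which are eliminated only by pairing the equation a second time against $|\varphi|^{2\sigma_j}\psi$ (the step \eqref{equation_psi_6} in Lemma \ref{lemma_energy_identity}); the remaining marginal term $t^{-1}\Im\langle\partial_x\psi,\Re[\varphi\overline{\partial_x\varphi}]\psi\rangle$ is then absorbed via Gagliardo--Nirenberg together with the dissipative margin $-\alpha t^{-\alpha-1}\|\psi\|^2$ produced by your second weight, with the specific split $\delta=t^{\alpha/3}$ (this is where $0<\alpha<1$ enters, cf.\ \eqref{energy_proof_2}). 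None of this mechanism appears in your proposal beyond the correct guess that corrections ``built from $|\widetilde u_{\mathrm p}|^2$'' together with $\lambda_1>0$ should work; since this identity is precisely the paper's main contribution, the proposal as it stands has a hole at its center. (Two minor points: your bound on $\widetilde u_{\mathrm p}-u_{\mathrm p}$ misses the factor $\log t$ in \eqref{w_p-widetilde_w_p_2}, harmless since $\gamma<\sigma-1$ is strict; and coincidence of $u_1,u_2$ near $t=\infty$ must still be propagated to all of $\R$ via the $L^2$ well-posedness theory, as the paper does.)
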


\begin{remark}
\label{remark_theorem_1}$ $
\begin{itemize}
\item[(1)]  For $1<\sigma<2$, \eqref{theorem_1_2} shows the existence of a global solution $u$ to \eqref{NLS}, which scatters to $u_{\mathrm p}$ as $t\to \infty$. For $4/3<\sigma<2$, we also have a unique global solution $u$ to \eqref{NLS} scattering to $u_{\mathrm p}$ as $t\to \infty$. Indeed, by using the relations $
e^{-itH_0}xe^{itH_0}=x+it\partial_x$, $(x+it\partial_x)\mathcal M(t)\mathcal D(t)=\mathcal M(t)\mathcal D(t)i\partial_x
$, we have
\begin{align}
\label{w_p-widetilde_w_p_1}
\|u_{\mathrm p}(t)-\widetilde u_{\mathrm p}(t)\|_{L^2}=\|w_{\mathrm p}(t)-\widetilde w_{\mathrm p}(t)\|_{L^2}
&\lesssim t^{1-\sigma},\\ 
\label{w_p-widetilde_w_p_2}
\|xe^{itH_0}\{u_{\mathrm p}(t)-\widetilde u_{\mathrm p}(t)\}\|_{L^2}=\|\partial_x\{w_{\mathrm p}(t)-\widetilde w_{\mathrm p}(t)\}\|_{L^2}
&\lesssim t^{1-\sigma}\log t. 
\end{align}
Theorem \ref{theorem_1}, combined with these two estimates, shows that  there exists a global solution $u$ to \eqref{NLS} satisfying,  with some $0<\beta_0<\min\{\ep/2,1/2\}$, 
$$
\|xe^{itH_0}\{u(t)- u_{\mathrm p}(t)\}\|_{L^2}+t^{1/3}\|u(t)- u_{\mathrm p}(t)\|_{L^2}\lesssim t^{-\beta_0}+t^{1-\sigma}\log t+t^{4/3-\sigma}
$$
as $t\to \infty$. This estimate, together with \eqref{w_p-widetilde_w_p_1} and \eqref{w_p-widetilde_w_p_2}, implies \eqref{theorem_1_1} with $\alpha=2/3$ and $0<\beta<\min\{\beta_0,4/3-\sigma\}$ . Hence, we obtain the uniqueness again by Theorem \ref{theorem_1}. 

On the other hand, for $1<\sigma\le 4/3$, we do not know  the uniqueness of the solution $u$ scattering to $u_{\mathrm p}$ due to to the restriction $\alpha\ge 2/3$. 
This is the reason to introduce the asymptotic profile $\widetilde u_{\mathrm p}$. We expect that this is a technical issue since the usual wave operator is known to exist if $\lambda_1=0$ (\cite{GOV_1994}). Therefore, in principle, the effect by $\lambda_2|u|^{2\sigma}u$ should be negligible as $t\to \infty$ even if the long-range term $\lambda_1|u|^2u$ is present. 
\item[(2)] The existence of a unique global solution $u\in C([T,\infty);L^2(\R))$ satisfying $e^{itH_0}u\in  C([T,\infty);\mathcal FH^1(\R))$ and \eqref{theorem_1_1} with sufficiently large $T>0$ holds for all $\sigma>1$. 
The restriction $\sigma<2$ is due to the use of the global well-posedness in $L^2(\R)$ and persistence of the $\mathcal FH^1$-regularity of the Cauchy problem of \eqref{NLS} to extend $u$ backward in time, showing that  $u\in C(\R;L^2(\R))$ and $e^{itH_0}u\in  C(\R;\mathcal FH^1(\R))$. For $2\le \sigma$, we expect that the above theorems still hold for $\varphi\in H^{1}\cap \mathcal FH^{1+\ep}$ with \eqref{theorem_1_1} replaced by
\begin{align*}
\|\<x\>e^{itH_0}\{u(t)-u_{\mathrm p}(t)\}\|_{L^2}+\|\partial_x\{u(t)-u_{\mathrm p}(t)\}\|_{L^2}
 +t^{\frac \alpha 2}\|u(t)-u_{\mathrm p}(t)\|_{L^2}.
\end{align*}
However we do not pursue this issue further here for the sake of simplicity. 
\item[(3)] The solution $u$ satisfies the same $L^\infty$-decay estimate as for the free solution as $t\to \infty$: \begin{align*}
\|u(t)\|_{L^\infty}\lesssim t^{-1/2},\quad t\to \infty.\end{align*}
Indeed, since $\|e^{-itH_0}\<x\>^{-1}\|_{L^2\to L^\infty}\lesssim t^{-1/2}$ and $\|\widetilde u_{\mathrm p}(t)\|_{L^\infty}\le t^{-1/2}\|\varphi\|_{L^\infty}$, we have$$\|u(t)\|_{L^\infty}\lesssim \|e^{-itH_0}\<x\>^{-1}\<x\> e^{itH_0}\{u(t)-\widetilde u_{\mathrm p}(t)\}\|_{L^\infty}+\|\widetilde u_{\mathrm p}(t)\|_{L^\infty}\lesssim t^{-1/2-\beta}+t^{-1/2}.$$
\item[(4)] The main ingredient in the proof of Theorem \ref{theorem_1} is to introduce a new formulation of the FSP for \eqref{NLS} based on the linearization of \eqref{NLS} around a given  asymptotic profile. For the linearized equation, which is a system of Schr\"odinger equations with non-symmetric, time-dependent and long-range potentials, we prove a modified energy identity and an associated energy estimate for the linearized equation, which allow us to apply a rather simple energy method to construct the modified wave operator.  In particular, our argument does not rely on either the complete integrability or the smoothness of the nonlinearity  of the cubic NLS. 
\item[(5)] Combining Theorem \ref{theorem_1} with the result by \cite{Deift_Zhou_2003} on the large data modified scattering for the CP of \eqref{NLS}, one can also construct the modified scattering operator for arbitrarily large scattering data if $\lambda_2=0$. Precisely, it has been shown by \cite[Theorems 4.9 and  4.10]{Deift_Zhou_2003} that for any $u_0\in \mathcal F H^1$, there exists a unique solution $u\in C(\R; L^2\cap L^\infty)$ to \eqref{NLS} with the initial condition $u(0)=u_0$ and a unique $u_+\in L^2\cap L^\infty$ such that $\|u(t)-u_{\mathrm p}(t)\|_{L^\infty}\to 0$ as $t\to \infty$\footnote{To be more precise, \cite{Deift_Zhou_2003} has established this statement for the equation $i\partial_t u+\partial_x^2u=2|u|^2u$. However, it can be easily translated to  \eqref{NLS} with $\lambda_2=0$ by the scaling $u\mapsto u(\lambda_1t/2,\sqrt{\lambda_1}x)$ with $\lambda_1>0$. }. This, together with the existence of the negative time modified wave operator $W_-$ provided by Theorem \ref{theorem_1}, shows that the modified scattering operator $S:\mathcal FH^{1+\ep}\ni u_-\mapsto u_+\in L^2\cap L^\infty$ is well-defined. 
\end{itemize}
\end{remark}

The defocusing condition $\lambda_1>0$ is essential in our argument, so we do not know whether Theorem \ref{theorem_1} holds for the focusing case. However, we still obtain an explicit upper bound for the size of scattering data $\varphi$ to ensure the modified scattering in the focusing case, which improves upon a part of an earlier result by \cite{Carles_2001} (see Remark \ref{remark} (2) for more details): 


\begin{theorem}
\label{theorem_2}
Suppose $\lambda_1,\lambda_2\in \R$, $1<\sigma<2$, $\varphi\in H^{1+\ep}(\R)$ with some $\ep>0$ and 
$$
|\lambda_1|\|\varphi\|_{L^\infty(\R)}^2<1. 
$$
Let $\max\{1,2|\lambda_1|\|\varphi\|_{L^\infty(\R)}^2\}<\alpha<2$ and $0<\beta<\min\{\ep/2,1-\alpha/2\}$. Then, there exists a unique global solution $u\in C(\R;L^2(\R))$ to \eqref{NLS} satisfying $e^{itH_0}u\in C(\R;\mathcal F H^1(\R))$ and the prescribed asymptotic condition \eqref{theorem_1_1} as $t\to \infty$. In particular, the modified wave operator
$$
W_+:\{u_+\in \mathcal FH^{1+\ep}(\R)\ |\ |\lambda_1|\|\widehat{u_+}\|_{L^\infty}^2<1\}\ni \F^{-1}\varphi\mapsto u(0)\in \mathcal FH^1(\R)
$$
is well-defined. The analogous result also holds for the negative time $t\to -\infty$. 
\end{theorem}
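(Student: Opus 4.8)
The plan is to prove Theorem~\ref{theorem_2} by the linearization-plus-energy scheme announced in Remark~\ref{remark_theorem_1}(4), the novelty being to retain the sharp constant in the single marginal term, so that the size condition $|\lambda_1|\|\varphi\|_{L^\infty}^2<1$ plays the role that the defocusing sign plays in Theorem~\ref{theorem_1}. First I would pass to the profile variable $w=\mathcal D(t)^{-1}\mathcal M(t)^{-1}u$. Using the elementary identity
\begin{align*}
(i\partial_t-H_0)\,\mathcal M(t)\mathcal D(t)w=\mathcal M(t)\mathcal D(t)\Big[i\partial_t w+\tfrac{1}{2t^2}\partial_x^2 w\Big]
\end{align*}
and $|\mathcal M(t)\mathcal D(t)w|^2=t^{-1}|w(\cdot/t)|^2$ (for $t>0$), equation \eqref{NLS} is equivalent to
\begin{align*}
i\partial_t w=-\tfrac{1}{2t^2}\partial_x^2 w+\lambda_1 t^{-1}|w|^2w+\lambda_2 t^{-\sigma}|w|^{2\sigma}w,
\end{align*}
a Schr\"odinger equation with a weak, $O(t^{-2})$ dispersion and with the long-range and short-range nonlinear coefficients $t^{-1}$ and $t^{-\sigma}$. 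Writing $q=w-\widetilde w_{\mathrm p}$ and using $e^{-itH_0}xe^{itH_0}=x+it\partial_x$ together with $(x+it\partial_x)\mathcal M\mathcal D=\mathcal M\mathcal D\,i\partial_x$, the three terms of \eqref{theorem_1_1} are transformed, via the unitarity of $\mathcal M\mathcal D$, into
\begin{align*}
\|\partial_x q\|_{L^2},\qquad t^{\alpha/2}\|q\|_{L^2},\qquad t^{1/2}\|\varphi q\|_{L^2},
\end{align*}
so the task becomes to prove that all three are $\lesssim t^{-\beta}$ with $q\to0$ as $t\to\infty$.

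Next I would linearize around $\widetilde w_{\mathrm p}$. Expanding the cubic term as $|w|^2w=|\widetilde w_{\mathrm p}|^2\widetilde w_{\mathrm p}+2|\widetilde w_{\mathrm p}|^2q+\widetilde w_{\mathrm p}^{\,2}\bar q+O(|\widetilde w_{\mathrm p}||q|^2+|q|^3)$ and using that $\widetilde w_{\mathrm p}$ solves \eqref{widetilde_w_p} with $|\widetilde w_{\mathrm p}|=|\varphi|$, the remainder satisfies the non-symmetric system
\begin{align*}
i\partial_t q=-\tfrac{1}{2t^2}\partial_x^2 q+V_1 q+V_2\bar q+F,\qquad V_1=2\lambda_1 t^{-1}|\varphi|^2,\quad V_2=\lambda_1 t^{-1}\widetilde w_{\mathrm p}^{\,2},
\end{align*}
where $F$ gathers the dispersive error $-\tfrac{1}{2t^2}\partial_x^2\widetilde w_{\mathrm p}$, the superquadratic cubic remainder and the short-range contribution. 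The modified energy identity in its simplest incarnation reads
\begin{align*}
\tfrac{d}{dt}\|q\|_{L^2}^2=2\,\Im\!\int V_2\bar q^{\,2}\,dx+2\,\Im\!\int F\bar q\,dx,
\end{align*}
because the dispersion and the \emph{real} potential $V_1$ contribute only real quantities and drop out. The dispersive part of $F$ is handled by moving one derivative onto $\bar q$, which needs only $\varphi\in H^1$ and the control of $\|\partial_x q\|_{L^2}$, and the cubic remainder is estimated through the weighted norm $\|\varphi q\|_{L^2}$ together with $\|q\|_{L^\infty}\lesssim\|q\|_{L^2}^{1/2}\|\partial_x q\|_{L^2}^{1/2}$.

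The size condition enters through the one genuinely borderline term $2\Im\int V_2\bar q^2$. Since $|V_2|\le|\lambda_1|\|\varphi\|_{L^\infty}^2\,t^{-1}$, this term is bounded by $\kappa\,t^{-1}\|q\|_{L^2}^2$ with $\kappa:=2|\lambda_1|\|\varphi\|_{L^\infty}^2$, so the homogeneous growth rate of $\|q\|_{L^2}^2$ it forces is $t^{\kappa}$. Integrating the resulting differential inequality backward from $t=\infty$ (where $q\to0$) against the weight $t^{\alpha}$ closes precisely when $\alpha>\kappa$; the existence of such an $\alpha<2$ requires $\kappa<2$, that is $|\lambda_1|\|\varphi\|_{L^\infty}^2<1$, which is exactly the hypothesis of Theorem~\ref{theorem_2}, and the admissible range $\max\{1,\kappa\}<\alpha<2$ with $\beta<1-\alpha/2$ is what appears in the statement. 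The analogous weighted estimates for $t^{1/2}\|\varphi q\|_{L^2}$ and, after differentiating the equation, for $\|\partial_x q\|_{L^2}$ — where the extra derivative falling on $\widetilde w_{\mathrm p}$ is absorbed by the $H^{1+\ep}$-regularity of $\varphi$ and the interpolation loss is controlled by $\beta<\ep/2$ — then yield the combined a priori bound $\lesssim t^{-\beta}$ on $[T,\infty)$, the third term following from the second once $\alpha>1$. A contraction (or continuity) argument in the space defined by these three norms produces $q$, hence $u$, on $[T,\infty)$; uniqueness follows by applying the same energy inequality to the difference of two solutions. Finally, since $\sigma<2$, one extends $u$ to all of $\R$ using $L^2$ global well-posedness and persistence of the $\mathcal F H^1$-regularity, and unwinding the change of variables gives the well-definedness of $W_+$.

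I expect the main obstacle to be precisely the non-symmetric, exactly $t^{-1}$-decaying off-diagonal coupling $\lambda_1 t^{-1}\widetilde w_{\mathrm p}^{\,2}\bar q$: it is marginal/long-range, it cannot be treated as a perturbation, and it is the sole source of the growth exponent $\kappa$. Everything else in $F$ is integrable in $t$; the entire content of Theorem~\ref{theorem_2} is to run the energy estimate with the sharp constant in this term so that the threshold $|\lambda_1|\|\varphi\|_{L^\infty}^2<1$ emerges, in contrast to Theorem~\ref{theorem_1}, where the defocusing sign is exploited to remove the size restriction altogether.
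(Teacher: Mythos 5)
Your reduction and your $L^2$-level analysis are sound, and they in fact reproduce the paper's mechanism at that level: modulo the time inversion $t\mapsto 1/t$ (the paper works with the pseudo-conformal transform $v$ rather than your profile variable $w$, which is the same thing up to conjugation), the first-order terms are promoted to a non-self-adjoint linear potential, the real diagonal part drops out of the mass identity, the off-diagonal part contributes at most $\Lambda t^{-1}\|q\|^2$ with $\Lambda=2|\lambda_1|\|\varphi\|_{L^\infty}^2$, and the weight $t^{\alpha}$ absorbs it exactly when $\alpha>\Lambda$, with $\alpha<2$ forcing $|\lambda_1|\|\varphi\|_{L^\infty}^2<1$; your remark that the third norm in \eqref{theorem_1_1} follows from the second once $\alpha>1$ is also how the paper argues. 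So the threshold you extract is the right one — but you only extract it where a spare weight exists.

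The gap is the derivative estimate, which is precisely the hard part of the theorem. If you differentiate the equation, then $p=\partial_x q$ solves $i\partial_t p=-\tfrac{1}{2t^2}\partial_x^2p+V_1p+V_2\bar p+(\partial_xV_1)q+(\partial_xV_2)\bar q+\partial_xF$, and the mass identity for $p$ again produces the marginal term $2\Im\int V_2\bar p^{\,2}dx$, of size $\Lambda t^{-1}\|p\|^2$ — but now there is no free exponent: the target is $\|p\|\lesssim t^{-\beta}$ with weight $t^0$, so nothing absorbs this term. Gronwall costs the linear flow a factor $(s/t)^{\Lambda/2}$ at the $H^1$ level, and the backward Duhamel integral $\int_t^\infty (s/t)^{\Lambda/2}s^{-1-\beta}ds$ converges only if $\Lambda/2<\beta$, a smallness condition vastly stronger than $|\lambda_1|\|\varphi\|_{L^\infty}^2<1$ (recall $\beta<1-\alpha/2<1/2$ while $\Lambda$ may approach $2$). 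Two further casualties of differentiating: $\partial_xV_2$ carries a factor $\log t$ from the phase of $\widetilde w_{\mathrm p}^{\,2}$, and $\partial_xF$ contains $t^{-2}\partial_x^3\widetilde w_{\mathrm p}$, which is not even pairable with $p$ given only $\varphi\in H^{1+\ep}$ (the paper sidesteps this by writing the profile error as $\mathcal R(t)v_{\mathrm p}$ with $\mathcal R(t)=e^{-itH_0}-I$ and using the fractional bound $\|\mathcal R(t)f\|_{H^s}\lesssim t^{\delta}\|f\|_{H^{2\delta+s}}$ of Lemma \ref{lemma_R}). The paper's resolution, and its key novelty, is to multiply the linearized equation by $\partial_t\psi$ instead of differentiating it: the dangerous terms then assemble into exact time derivatives of the corrections $\lambda_j t^{\sigma_j-2}D_{\sigma_j}[\psi]$, yielding the modified energy identity of Lemma \ref{lemma_energy_identity}, which contains no $t^{-1}\|\partial_x\psi\|^2$ term and no logarithms — only the cross term $\lambda_j\sigma_j t^{\sigma_j-2}\Im\langle\partial_x\psi,|\varphi|^{2\sigma_j-2}\Re[\varphi\overline{\partial_x\varphi}]\psi\rangle$, which is rendered subcritical by Gagliardo--Nirenberg and the splitting $\delta=t^{\alpha/3}$. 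For Theorem \ref{theorem_2}, since $\alpha>1$, the correction $|\lambda_1|t^{-1}D_1[\psi]\lesssim t^{-1}\|\psi\|^2$ is itself dominated by the $t^{-\alpha}\|\psi\|^2$ component of the energy (this is Lemma \ref{lemma_Theorem_2_1}), and then your threshold $\alpha>\Lambda$ suffices for the full $H^1$-level propagator bound. Without this modified-energy cancellation, or some substitute cancellation at the derivative level, your scheme cannot reach the stated constant.
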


\begin{remark}
\label{remark}
$ $
\begin{itemize}
\item[(1)] The same remarks as Remark \ref{remark_theorem_1} (1), (2) and (3) also apply to Theorem \ref{theorem_2}. For instance, if $0<\gamma<\min\{\beta,\sigma-1\}$, then Theorem \ref{theorem_2} ensures the existence of a global solution $u$ satisfying
$$
\|xe^{itH_0}\{u(t)-u_{\mathrm p}(t)\}\|_{L^2}\lesssim t^{-\gamma},\quad t\to \infty.
$$
If in addition $4/3<\sigma<2$, we also have the uniqueness of such a solution $u$ scattering to $u_{\mathrm p}$ as $t\to \infty$. 

\item[(2)] The author of  \cite[Corollary 1]{Carles_2001} constructed the modified wave operator $$W_+:\{u_+\in H^3\cap \<x\>^{-1}H^2\ |\ |\lambda_1|\|\widehat {u_+}\|_{L^\infty(\R)}^2<1/2\} \ni \F^{-1}\varphi\mapsto u(0)\in H^1\cap \mathcal FH^1.$$ While the topology of the scattering is stronger than that in Theorem \ref{theorem_2}, our result improves the regularity and smallness conditions on $\varphi$. 
\end{itemize}
\end{remark}

\subsection{Idea of the proof}
Here we describe the idea of the proof of Theorem \ref{theorem_1} with explaining the difficulty of the large data problem. 
In what follows, we denote $\|f\|=\|f\|_{L^2(\R)}$.  Define for short $\sigma_1=1$ and $\sigma_2=\sigma$ so that 
\begin{align}
\lambda_1|u|^2u+\lambda_2 |u|^{2\sigma}  u=\sum_{j=1}^2\lambda_j|u|^{2\sigma_j}u. 
\end{align}
Instead of $u,\widetilde u_{\mathrm p}$, it is convenient to work with their pseudo-conformal transforms defined by
\begin{align}
\label{pseudo_1}
v(t,x)&:=\overline{\mathcal T[\mathcal M(t)\mathcal D(t)]^{-1}u(t,x)},\\ 
\label{pseudo_2}
v_{\mathrm p}(t,x)&:=\overline{\mathcal T[\mathcal M(t)\mathcal D(t)]^{-1}\widetilde u_{\mathrm p}(t,x)}=e^{-i\lambda_1 |\varphi(x)|^2\log |t|-i\frac{\lambda_2|\varphi(x)|^{2\sigma}}{\sigma-1} t^{\sigma-1}}\overline{\varphi(x)},
\end{align}
where $\mathcal Tf(t):=f(1/t)$. By \eqref{NLS}, \eqref{widetilde_w_p}, \eqref{Dollard_1} and a direct computation, we see that they satisfy
\begin{align}
\label{v}
(i\partial_t-H_0)v&=\sum_{j=1}^2\lambda_j t^{\sigma_j-2}|v|^{2\sigma_j}v,\\
\label{v_p}
i\partial_t v_{\mathrm p}&=\sum_{j=1}^2\lambda_j t^{\sigma_j-2}|v_{\mathrm p}|^{2\sigma_j}v_{\mathrm p}. 
\end{align}
Moreover, we have
$$
\|u(t)\|=\|\{\mathcal M(t)\mathcal D(t)\}^{-1}u(t)\|,\quad \|xe^{itH_0}u(t)\|=\|\partial_x\{\mathcal M(t)\mathcal D(t)\}^{-1}u(t)\|
$$
and hence the asymptotic condition \eqref{theorem_1_1} is equivalent to 
\begin{align}
\label{v-v_p}
\|\partial_xv(t)-\partial_xv_{\mathrm p}(t)\|+t^{-\alpha/2}\|v(t)-v_\mathrm p(t)\|\lesssim t^\beta,\quad t\to +0. 
\end{align}
It follows from \eqref{v} and \eqref{v_p} that $v-v_{\mathrm p}$ solves
\begin{align}
\label{equation_v_1}
\left(i\partial_t -H_0\right)(v-v_{\mathrm p}-\mathcal R(t) v_{\mathrm p})
=\sum_{j=1}^2\lambda_j t^{\sigma_j-2}\left\{ |v|^{2\sigma_j}v-|v_{\mathrm p}|^{2\sigma_j}v_{\mathrm p}-\mathcal R(t)|v_{\mathrm p}|^{2\sigma_j}v_{\mathrm p}\right\},
\end{align}
where $\mathcal R(t)f=e^{-itH_0}f-f$ satisfies $\|\mathcal R(t)f\|\lesssim t^{\delta}\|f\|_{H^{2\delta}}$ for $0\le \delta\le1$ (see Lemma \ref{lemma_R}). Indeed, 
\begin{align*}
(i\partial_t-H_0)v_{\mathrm p}
&=e^{-itH_0}i\partial_te^{itH_0}v_{\mathrm p}=e^{-itH_0}\left\{i\partial_t v_{\mathrm p}+i\partial_t (e^{itH_0}-I)v_{\mathrm p}\right\}\\
&=\left\{I+e^{-itH_0}-I\right\}\sum_{j=1}^2\lambda_j t^{\sigma_j-2}|v_{\mathrm p}|^{2\sigma_j}v_{\mathrm p}+e^{-itH_0} i\partial_te^{itH_0}(I-e^{-itH_0})v_{\mathrm p}\\
&=\sum_{j=1}^2\lambda_j t^{\sigma_j-2}\left\{|v_{\mathrm p}|^{2\sigma_j}v_{\mathrm p}+\mathcal R(t)|v_{\mathrm p}|^{2\sigma_j}v_{\mathrm p}\right\}-(i\partial_t -H_0)\mathcal R(t)v_{\mathrm p}
\end{align*}
and hence \eqref{equation_v_1} holds. 
We set  for short $v_*=v-v_{\mathrm p}$. By virtue of \eqref{v-v_p} and \eqref{equation_v_1}, it is natural to consider the following integral equation: 
\begin{align}
v_*(t)=\mathcal R(t)v_{\mathrm p}(t)-i \sum_{j=1}^2\lambda_j \int_0^t e^{-i(t-s)H_0}s^{\sigma_j-2}\left\{ |v|^{2\sigma_j}v-|v_{\mathrm p}|^{2\sigma_j}v_{\mathrm p}-\mathcal R(s)|v_{\mathrm p}|^{2\sigma_j}v_{\mathrm p}\right\}ds,
\label{IE}
\end{align}
where the difference $|v|^{2\sigma_j}v-|v_{\mathrm p}|^{2\sigma_j}v_{\mathrm p}$ satisfies
\begin{align}
\left||v|^{2\sigma_j}v-|v_{\mathrm p}|^{2\sigma_j}v_{\mathrm p}\right|
\label{F(v)-F(v_p)}
\lesssim |\varphi|^{2\sigma_j}|v_*|+|v_*|^{2\sigma_j+1}.
\end{align}
Then one can solve \eqref{IE}, for instance,  in the energy space
$$
\{v_*\in C((0,T];H^1(\R))\ |\ \sup_{0<t\le T}t^{-\beta}\left(\|\partial_x v_*(t)\|+t^{-\alpha} \|v_*(t)\|\right)<\infty\}
$$
for sufficiently small $T$ and some $0<\alpha,\beta<1/2$ as long as $|\lambda_1|\|\varphi\|_{L^\infty}^2$ is sufficiently small. This type formulation (with or without the use of pseudo-conformal transform) has been employed in the literature on the small data modified scattering (see for instance \cite{Ozawa_1991,Hayashi_Naumkin_2006,Lindblad_Soffer_2006}). However, this argument does not work for the large data problem. An obstruction is the first term $|\varphi|^{2}|v_*|$ in the RHS of \eqref{F(v)-F(v_p)} with $\sigma_1=1$ since the integral 
$
\int_0^t s^{-1}\|v_*(s)\|_{H^1} ds
$ 
decays as $t\to +0$ with at most the same rate as that of $\|v_*\|_{H^1}$ and thus cannot be absorbed in the LHS of \eqref{IE}. 

To overcome this difficulty, we introduce a new formulation in which the first order term is regarded as a linear potential term. For a technical reason, we also regard the first order term of the short-range part as a linear potential term. Precisely, we extract the first order term from the difference $\sum_{j=1}^2\lambda_j t^{\sigma_j-2}\left(|v|^{2\sigma_j}v-|v_{\mathrm p}|^{2\sigma_j}v_{\mathrm p}\right)$ by Taylor's expansion, which leads the following system of nonlinear Schr\"odinger equations: 
\begin{align}
\label{Equation}
(i\partial_t-\mathcal H(t))(\vec v_*-\vec e_1)=\sum_{j=1}^2\lambda_j t^{\sigma_j-2}\mathcal J\vec G_{\sigma_j}[v_*]+\mathcal J\vec e_{2},
\end{align}
where $\vec v_*=(v_*,\overline v_*)^{\mathrm T}$ is $\C^2$-valued, $\mathcal J=\diag(1,-1)$, $\vec e_j=(e_j,\overline{e_j})^{\mathrm T}$ for $j=1,2$ are error terms, $\sum_{j=1}^2\lambda_j t^{\sigma_j-2}\mathcal J\vec G_{\sigma_j}[v_*]$ is a new nonlinear term which consists of $\sum_{j=1}^2\lambda_j t^{\sigma_j-2}\left\{ |v|^{2\sigma_j}v-|v_{\mathrm p}|^{2\sigma_j}v_{\mathrm p}\right\}$ minus its first order term.  The Hamiltonian $\mathcal H(t)$ is of the form $\mathcal H_0+\mathcal V(t,x)$, where $\mathcal H_0=\diag(H_0,-H_0)$ is the matrix-valued free Schr\"odinger operator and the potential $\mathcal V(t,x)$, which consists of coefficients of the first order part of $\sum_{j=1}^2\lambda_j t^{\sigma_j-2}\left\{ |v|^{2\sigma_j}v-|v_{\mathrm p}|^{2\sigma_j}v_{\mathrm p}\right\}$, is a non-symmetric and time-dependent potential of long-range type satisfying $\mathcal V(t,x)=O(t^{-1})$ as $t\to +0$. 
The main advantage of this equation compared with \eqref{equation_v_1} is that we have
$$
\sum_{j=1}^2\left|\lambda_j t^{\sigma_j-2}\mathcal J\vec G_{\sigma_j}[v_*]\right|\lesssim t^{-1}(|v_*|^2+|v_*|^3)+t^{\sigma-2}(|v_*|^2+|v_*|^{2\sigma+1})
$$ from which one can expect that the new nonlinear term decays faster than $v_*$ as $t\to +0$. Hence, if the propagator $\mathcal U(t,s)$ generated by $\mathcal H(t)$, {\it i.e.}, the solution operator for the linearized equation 
\begin{align}
\label{idea_0}
(i\partial_t-\mathcal H(t))\bvec \Psi=0,
\end{align} satisfies a good energy estimate in a suitable Sobolev space algebra, then \eqref{Equation} can be solved by a standard energy method. To this end, we introduce a modified energy
\begin{align}
\label{Q_1}
Q_\alpha[\psi](t)&=\frac{\|\partial_x  \psi(t)\|^2}{4}+t^{-\alpha} \|\psi(t)\|^2+\lambda_1  t^{-1} \||\varphi|^{\sigma-1}\Re[\overline{v_{\mathrm p}(t)}\psi(t)]\|^2
\end{align}
and show the following energy estimate
\begin{align}
\label{Idea_1}
Q_\alpha[\mathcal U(t,s)\vec \psi_0](t)\lesssim Q_\alpha[ \psi_0](s),\quad 0<s\le t\le1,
\end{align}
with the initial data $\vec\psi_0=(\psi_0,\overline{\psi_0})^{\mathrm T}$, where $Q_\alpha[(f_1, f_2)^{\mathrm T}]:=Q_\alpha[f_1]+Q_\alpha[\overline{f_2}]$. This is the crucial step in the proof and maybe the most important contribution of the present paper. It is worth mentioning that $\mathcal U(t,s)$ does not preserve the $L^2$-norm since $\mathcal H(t)$ is not symmetric. Thus, even dealing with the $L^2$-bound  for $\mathcal U(t,s)$ is highly non-trivial. The proof of \eqref{Idea_1} essentially relies on the following modified energy identity:
\begin{align}
\frac{d}{dt}\widetilde{Q}_{\alpha}[\psi]
\nonumber
&=
-\alpha t^{-\alpha-1} \|\psi\|^2
-\sum_{j=1}^2\lambda_j\sigma_j(2-\sigma_j) t^{\sigma_j-3}\||\varphi|^{\sigma_j-1}\Re[\overline{v_{\mathrm p}}\psi]\|^2\\
\nonumber
&\quad -4\sum_{j=1}^2\lambda_j\sigma_j t^{\sigma_j-2-\alpha}\<|\varphi|^{2\sigma_j-2}\Re[\overline{v_{\mathrm p}}\psi],\Im[\overline{v_{\mathrm p}}\psi]\>\\
\label{identity}
&\quad 
-\sum_{j=1}^2\lambda_j\sigma_j  t^{\sigma_j-2}\Im\<\partial_x  \psi,|\varphi|^{2\sigma_j-2}\Re[\varphi\overline{\partial_x  \varphi}]\psi\>
\end{align}
for any $H^1$-solution $\bvec\Psi(t)=(\psi(t),\overline{\psi(t)})^{\mathrm T}$ to \eqref{idea_0}, where $$\widetilde{Q}_{\alpha}[\psi]:=Q_\alpha[\psi]+\lambda_2\sigma_2 t^{\sigma_2-2}\||\varphi|^{\sigma_2-1}\Re[\overline{v_{\mathrm p}(t)}\psi(t)]\|^2.$$
This energy identity implies $\frac{d}{dt}\widetilde{Q}_{\alpha}[\psi]\lesssim t^{\alpha/3-1}\widetilde{Q}_{\alpha}[\psi]$ and hence \eqref{Idea_1} since $Q_\alpha[\psi]\sim \widetilde{Q}_{\alpha}[\psi]$ for sufficiently small $t>0$. 

Estimate \eqref{Idea_1}, together with suitable energy bounds for $Q_\alpha[G_{\sigma_j}[v_*]]$ and $Q_\alpha[e_j]$, allows us to apply a simple energy method to show Theorem \ref{theorem_1}. Specifically, we consider the energy space 
$$
\{\vec v_*=(v_*,\overline{v_*})^\mathrm T\in C((0,T];H^1(\R)\times H^1(\R))\ |\ \sup_{0<t\le T}t^{-\beta}\left(\|\partial_xv_*(t)\|+t^{-\alpha/2}\|v_*(t)\|\right)<\infty\}
$$
and show that \eqref{Equation} subjected to the condition $\|\partial_xv_*(t)\|+t^{-\alpha/2}\|v_*(t)\|\lesssim t^{\beta}$ as $t\to +0$ admits a unique global solution under the assumptions on the parameter $\alpha$ and $\beta$ stated in Theorem \ref{theorem_1}. Once a unique global solution to \eqref{Equation} is obtained, its inverse pseudo-conformal transform gives a unique global solution to the original NLS \eqref{NLS} satisfying the asymptotic condition \eqref{theorem_1_1}. 

\begin{remark}[Some open problems]
\label{remark_future}
We expect that the method in this paper has a potential application to some other NLS with long-range nonlinearities in one space dimension. For example, if we consider the NLS with subcritical nonlinearities of the form
$$
i\partial_t u-H_0u=|u|^{2\sigma}u,\quad x\in \R,\quad t\in \R,\quad 0<\sigma<1,
$$
then, by an essentially same argument, we can prove a similar energy estimate as \eqref{Idea_1} for the solution to the associated linearized equation provided $2/3<\sigma<1$ and $\varphi$ is sufficiently smooth. However, this is not enough and there are several difficulties in handling nonlinear terms (see, for instance, Remark \ref{remark_G} below). This is left for future work. 

It would be also interesting whether it applies to the cubic NLS with a linear long-range potential (see \cite{Kawamoto_Mizutani_PAMS} for the small-data setting), or the system of cubic NLS under appropriate structural conditions on the nonlinearity such as the Manakov system \cite{Manakov}. 

To extend the present method to two or three space dimensions, we need to find a higher-order counterpart of the modified energy \eqref{Q_1}, which is unclear. 
\end{remark}

\subsection{Comparison with the Hartree equation}
\label{section_Hartree}
As mentioned in the introduction, the modified scattering for the Hartree equations \eqref{Hartree} has been established by 
\cite{Ginibre_Velo_2000_1,Ginibre_Velo_2000_2,Ginibre_Velo_2001,Ginibre_Velo_2014,Ginibre_Velo_2015,Nakanishi_CPAA,Nakanishi_AHP} without size restriction on scattering data. Here we compare our method with those of \cite{Nakanishi_CPAA,Nakanishi_AHP, Ginibre_Velo_2014,Ginibre_Velo_2015} in the critical case $\sigma=1$.  It should be stress that these methods also work well for the subcritical case $0<\sigma<1$, regardless the defocusing or focusing cases. 

Let $g(u)=\lambda |x|^{-1}*|u|^2$ for short. By the pseudo-conformal transform \eqref{pseudo_1}, \eqref{Hartree} with $\sigma=1$ is transformed into
\begin{align}
\label{Hartree_1}
i\partial_t v+\frac12\Delta v=t^{-1}g(v)v.
\end{align}

On one hand, the author of \cite{Nakanishi_CPAA,Nakanishi_AHP}  considered the function $$ v_1=e^{ig(\varphi)\log t}e^{-it\Delta/2}v.$$ Then \eqref{Hartree_1} is equivalent to
$$
i\partial_t v_1+V_1(v) v_1=0,
$$
where 
$
V_1(v)=t^{-1}e^{-ig(\varphi)\log t}\left\{e^{-it\Delta/2}g(v)e^{it\Delta/2}-g(\varphi)\right\}e^{ig(\varphi)\log t}
$. Then the unique solution $v_1$ satisfying $v_1\to \varphi$ as $t\to 0$ is constructed as the limit of the sequence $(w_k)$ defined iteratively by solving the following system: 
$$
i\partial_t  w_k+V_1(w_{k-1}) w_k=0,\ 0<t\ll1;\quad w_k(0)=\varphi.
$$
This argument is quite similar to solving the linearized equation $$i\partial_t  v_1+V_1(f) v_1=0,\ 0<t\ll1;\quad  v_1(0)=\varphi.$$ and showing that the composition of maps $v_1\mapsto e^{it\Delta/2}e^{-ig(\varphi)\log t}f$ and $f\mapsto  v_1$ is a contraction, where  $f$ is a given function belonging to the same energy space as that for the solution $ v$. 

On the other hand, the authors of \cite{Ginibre_Velo_2014,Ginibre_Velo_2015} dealt with the function $$ v_2=e^{i g_S(\varphi)\log t}v,$$ where $ g_S(\varphi)=\chi(t^{1/2}|D|)g(\varphi)$ and $\chi\in C^\infty([0,2))$ with $\chi\equiv1$ on $[0,1]$. Roughly speaking $g_S$ is the restriction of $g$ near the frequency region $t^{1/2}|\xi|\lesssim1$. Then \eqref{Hartree_1} becomes $$i\partial_t  v_2+L(v) v_2=0,$$ where
$
L( v)=\frac12(\nabla-iA)^2+V_2(v)$,  $V_2(v)=-t^{-1}(g( v)- g_S(\varphi))+t^{-1/2}(\log t)|D|\chi'(t^{1/2}|D|)g(\varphi)
$ and $A=\nabla \{e^{-i g_S(\varphi)\log t}\varphi\}$. 
As for \cite{Nakanishi_CPAA,Nakanishi_AHP}, an important step is to solve the linearized equation: 
$$
i\partial_t  v_2=L(f)v_2,\ 0<t\ll1;\quad  v_2(0)=\varphi.
$$

Therefore, it turns out that suitable energy estimates for the above linearized equations play crucial roles in both methods. For that purpose, both methods employ the  smoothing property (in the high frequency region) of the convolution such as
$$
\|
|x|^{-1}*h\|_{\dot H^s}=C_{d}\||D|^{1-d}h\|_{\dot H^s}\lesssim \|h\|_{\dot H^{s+1-d}},\quad h\in H^{s+1-d}(\R^d),\quad d>1,
$$ 
for showing the integrability in $t\in (0,1]$ of the potential terms $V_1(f)$ or $V_2(f)$. Indeed,  combining with the estimate $\|(e^{it\Delta/2}-1)h\|\lesssim t^\delta \|h\|_{\dot H^{2\delta}}$ in the case of the former method, or $\|(1-\chi(t^{1/2}|D|)|h\|\lesssim t^\delta \|h\|_{\dot H^{2\delta}}$ for the latter one, this smoothing property can be used to obtain an additional time-decay as $t\to 0$ without loss of derivatives (see \cite[Lemma 5.1]{Nakanishi_CPAA} and \cite[Proposition 3.2]{Ginibre_Velo_2014} and their proofs).

However, such a smoothing property (and hence an additional time-decay without loss of derivatives) is unavailable for the power-type nonlinearities. Therefore, applying these methods directly to the present setting seems difficult. Instead, we renormalize the non-integrable part of the nonlinearity as a time-dependent potential term $\mathcal V(t,x)\vec v_*$ and incorporate it into the linear part. Under the defocusing condition $\lambda_1>0$, it becomes a positive definite (and thus good) term in the modified energy for the linearized equation (see the last term in \eqref{Q_1}), even though it is still not integrable in $t$. As far as we know, this viewpoint is new to the study of modified scattering for the NLS. 

\subsection{Organization of the paper}

The rest of the paper is devoted to the proof of Theorems \ref{theorem_1} and \ref{theorem_2}. We first explain the linearization of \eqref{NLS} and derivation of the integral equation in Section \ref{subsection_integral_equation}. The energy estimate for the modified energy $Q_{\alpha}$ is proved in Section \ref{subsection_energy}. The proofs of Theorems \ref{theorem_1} and \ref{theorem_2} are given in Sections \ref{section_proof_Theorem_1} and \ref{section_proof_Theorem_2}, respectively.

\subsection{Notation}
\label{notation}
Here we summarize notations used in this paper: 
\begin{itemize}
\item $\<x\>=\sqrt{1+|x|^2}$. 
\item $L^p(\R)$ and $H^s(\R)$ denote the Lebesgue and $L^2$-Sobolev spaces, respectively. 
\item $\<f,g\>=\int_\R f\overline gdx$ denotes the inner product in $L^2(\R)$. 
\item $\|f\|=\|f\|_{L^2(\R)}$ for $f\in L^2(\R)$. 
\item $\mathcal L^2(\R)=L^2(\R)\times L^2(\R)$ and $\mathcal H^s(\R)=H^s(\R)\times H^s(\R)$. 
\item $\mathbb B(X)$ denotes the space of bounded operators on $X$. 
\item For $a\in \C$, $\vec a$ denotes the vector $(a,\overline a)^{\mathrm T}\in \C^2$, the transposed vector of $(a,\overline a)$. 
\item $A\lesssim B$ (resp. $A\gtrsim B$) means there exists a non-essential constant $C>0$ such that $A\le CB$ (resp. $A\ge CB$). 
\end{itemize}

\section{Preliminaries}

\subsection{Integral equation}
\label{subsection_integral_equation}
As in the standard argument, we first rewrite the NLS \eqref{NLS} subject to the condition $\|u(t)-\widetilde u_{\mathrm p}(t)\|\to 0$  as $t\to \infty$ as an appropriate integral equation. To this end, we assume for a while $u$ is a smooth solution to \eqref{NLS}. Let $v$ and $v_{\mathrm p}$ be the pseudo-conformal transforms of $u$ and $\widetilde u_{\mathrm p}$ defined by \eqref{pseudo_1} and \eqref{pseudo_2}, respectively, satisfying \eqref{equation_v_1}. To extract the first order term with respect to $v_*$ from the nonlinear term, we use the following: 
\begin{lemma}
\label{lemma_Taylor}
For all $z_0,z_1\in \C$, 
$$
|z_1|^{2\sigma}z_1=|z_0|^{2\sigma}z_0+(\sigma+1)|z_0|^{2\sigma}z_{*}+\sigma|z_0|^{2\sigma-2}z_0^2\overline{z_{*}}+G_\sigma[z_{*}],
$$
where $z_{*}:=z_1-z_0$ and $G_{\sigma}[z_{*}]$ is defined by, with $z_\theta=z_0+\theta z_{*}$, 
\begin{align*}
G_{\sigma}[z_{*}]=(\sigma+1)z_{*}\int_0^1\left(|z_\theta|^{2\sigma}-|z_0|^{2\sigma}\right)d\theta+\sigma\overline{z_{*}}\int_0^1\left(|z_\theta|^{2\sigma-2}z_\theta^2-|z_0|^{2\sigma-2}z_0^2\right)d\theta. 
\end{align*}
\end{lemma}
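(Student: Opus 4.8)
The plan is to read the claimed identity as the first-order Taylor expansion, with integral remainder, of the map $F(z) = |z|^{2\sigma}z$ about the base point $z_0$, and to obtain it by integrating $F$ along the segment $z_\theta = z_0 + \theta z_*$, $\theta\in[0,1]$, joining $z_0$ to $z_1$. First I would set $g(\theta) := F(z_\theta) = |z_\theta|^{2\sigma}z_\theta$ and apply the fundamental theorem of calculus to write $F(z_1) - F(z_0) = g(1) - g(0) = \int_0^1 g'(\theta)\,d\theta$. Everything then reduces to a clean computation of $g'$.

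The key step is that computation. Writing $p(\theta) := |z_\theta|^2 = |z_0|^2 + 2\theta\Re(z_0\overline{z_*}) + \theta^2|z_*|^2$, a nonnegative real polynomial in $\theta$ with $p'(\theta) = z_*\overline{z_\theta} + \overline{z_*}z_\theta = 2\Re(\overline{z_\theta}z_*)$, and noting $|z_\theta|^{2\sigma} = p(\theta)^\sigma$, one differentiates to get $\frac{d}{d\theta}|z_\theta|^{2\sigma} = \sigma p^{\sigma-1}p' = \sigma|z_\theta|^{2\sigma-2}(z_*\overline{z_\theta}+\overline{z_*}z_\theta)$. Combining this with $\frac{d}{d\theta}z_\theta = z_*$ via the product rule, and simplifying through $|z_\theta|^{2\sigma-2}z_\theta\overline{z_\theta} = |z_\theta|^{2\sigma}$, yields
\[
g'(\theta) = (\sigma+1)|z_\theta|^{2\sigma}z_* + \sigma|z_\theta|^{2\sigma-2}z_\theta^2\,\overline{z_*}.
\]
I would then integrate in $\theta$, pull the constants $z_*$ and $\overline{z_*}$ outside the integrals, and split each integrand into its value at $\theta = 0$ plus the deviation: $|z_\theta|^{2\sigma} = |z_0|^{2\sigma} + (|z_\theta|^{2\sigma}-|z_0|^{2\sigma})$ and likewise for $|z_\theta|^{2\sigma-2}z_\theta^2$. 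The $\theta=0$ pieces integrate to the stated linear terms $(\sigma+1)|z_0|^{2\sigma}z_* + \sigma|z_0|^{2\sigma-2}z_0^2\overline{z_*}$, and the two remaining integrals are precisely the definition of $G_\sigma[z_*]$, which closes the identity.

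The one point requiring genuine care, and the step I would check explicitly, is the differentiation $\frac{d}{d\theta}p(\theta)^\sigma = \sigma p^{\sigma-1}p'$ at any $\theta_0$ where $p(\theta_0)=0$, i.e. $z_{\theta_0}=0$, since $p^{\sigma-1}$ is singular there when $\sigma<1$. In the range of interest, however, $\sigma\ge 1$, and when $z_*\neq 0$ such a zero forces $z_\theta = (\theta-\theta_0)z_*$, so $p(\theta) = |z_*|^2(\theta-\theta_0)^2$ and $p^\sigma = |z_*|^{2\sigma}|\theta-\theta_0|^{2\sigma}$ is $C^1$ with derivative given by the same formula, both sides vanishing at $\theta_0$ because $2\sigma-1>0$; away from zeros the chain rule applies verbatim, and the case $z_*=0$ (hence $z_1=z_0$) is trivial. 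This establishes $g\in C^1([0,1])$ and legitimizes the fundamental theorem of calculus, so no regularity gap remains and the proof is complete.
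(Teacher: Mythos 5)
Your proof is correct and is essentially the paper's argument: the paper invokes Taylor's formula with integral remainder in Wirtinger derivatives, $f(z_1)=f(z_0)+z_*\int_0^1 f_z(z_\theta)\,d\theta+\overline{z_*}\int_0^1 f_{\overline z}(z_\theta)\,d\theta$ with $f(z)=|z|^{2\sigma}z$, which is exactly your fundamental-theorem-of-calculus computation of $g(\theta)=F(z_\theta)$ along the segment, followed by the same splitting of the integrands at $\theta=0$. Your explicit check of the chain rule for $p(\theta)^\sigma$ at zeros of $z_\theta$ (valid since $\sigma\ge 1$) is a welcome detail the paper leaves implicit, but it does not change the route.
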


\begin{proof}
The result follows from Taylor's formula
$$
f(z_1)=f(z_0)+z_*\int_0^1f_z(z_\theta)d\theta+\overline{z_*}\int_0^1f_{\overline z}(z_\theta)d\theta,
$$
where $f_z=(\partial_xf-i\partial_yf)/2$ and $f_{\overline z}=(\partial_xf+i\partial_yf)/2$ for $z=x+iy$. 
\end{proof}


Recall that $v_*=v-v_{\mathrm p}$. It follows from this lemma that \eqref{equation_v_1} is rewritten as
\begin{align}
\nonumber
&\left(i\partial_t -H_0\right)(v_*-\mathcal R(t) v_{\mathrm p})\\
\label{equation_v_2}
&=\sum_{j=1}^2\lambda_j t^{\sigma_j-2}\left\{(\sigma_j+1)|\varphi|^{2\sigma_j}v_*+\sigma_j |\varphi|^{2\sigma_j-2}v_{\mathrm p}^2\overline{v_*}+G_{\sigma_j}[v_*]-\mathcal R(t)|v_{\mathrm p}|^{2\sigma_j}v_{\mathrm p}\right\},
\end{align}
where, with $v_\theta=v_{\mathrm p}+\theta v_*$, 
\begin{align}
\label{G_sigma_j}
G_{\sigma_j}[v_*]=(\sigma_j+1)v_{*}\int_0^1\left(|v_\theta|^{2\sigma_j}-|v_{\mathrm p}|^{2\sigma_j}\right)d\theta+\sigma_j\overline{v_{*}}\int_0^1\left(|v_\theta|^{2\sigma_j-2}v_\theta^2-|v_{\mathrm p}|^{2\sigma_j-2}v_{\mathrm p}^2\right)d\theta. 
\end{align}
Now we consider the following linearized equation
\begin{align}
\label{equation_psi_1}
i\partial_t \psi -H_0 \psi =\sum_{j=1}^2\lambda_j t^{\sigma_j-2}\left\{(\sigma_j+1)|\varphi|^{2\sigma_j}\psi+\sigma_j |\varphi|^{2\sigma_j-2}v_{\mathrm p}^2\overline{\psi}\right\},
\end{align}
where we will use in the next subsection that the RHS can be written as
\begin{align}
\label{equation_psi_2}
\sum_{j=1}^2\lambda_j t^{\sigma_j-2}\left(|\varphi|^{2\sigma_j}\psi+2\sigma_j |\varphi|^{2\sigma_j-2}\Re[\overline{v_\mathrm p}\psi]v_{\mathrm p}\right). 
\end{align}
Since \eqref{equation_psi_1} is not $\C$-linear, we make it as a linear system in a usual way as follows. Define
\begin{align}
\mathcal H(t)=\mathcal H_0+\mathcal V=\begin{pmatrix}H_0&0\\0&-H_0\end{pmatrix}+\sum_{j=1}^2\lambda_j t^{\sigma_j-2}\begin{pmatrix}(\sigma_j+1)|\varphi|^{2\sigma_j}& \sigma_j |\varphi|^{2\sigma_j-2}v_{\mathrm p}(t)^2\\-\sigma_j |\varphi|^{2\sigma_j-2}\overline{v_{\mathrm p}(t)}^2&-(\sigma_j+1)|\varphi|^{2\sigma_j}\end{pmatrix}.
\end{align}
Then we arrive at the linearized system of \eqref{equation_v_2} around the profile $v_\mathrm p$: 
\begin{align}
\label{linearized_equation}
i\partial_t \bvec\Psi -\mathcal H(t)\bvec\Psi=0,
\end{align}
where $\bvec \Psi=\bvec \Psi(t,x)$ is $\C^2$-valued. Note that \eqref{equation_psi_1} is equivalent to \eqref{linearized_equation} with $\bvec \Psi=(\psi,\overline \psi)^{\mathrm T}$, where $\bvec a^{\mathrm T}$ is the transposed vector of $\bvec a\in \C^2$.  Let $\mathcal U(t,s)$ be the associated propagator, namely the solution to \eqref{linearized_equation} with the initial state $\bvec\Psi(s,x)=\bvec\Psi_0(x)$ at time $s$ is given by $\bvec\Psi(t,x)=[\mathcal U(t,s)\bvec\Psi_0](x)$. Basic properties of $\mathcal U(t,s)$ used in the following argument are summarized as follows. In what follows, for a complex number $a\in \C$, we denote 
$$
\vec a=(a,\overline a)^{\mathrm T}\in \C^2,\quad \mathcal J\vec a=\begin{pmatrix}1&0\\0&-1\end{pmatrix}\vec a=(a,-\overline a)^{\mathrm T}\in \C^2, 
$$
We also set $\L^p=L^p\times L^p$ and $\mathcal H^k=H^k\times H^k$. Let  $\mathbb B(X)$ denote the space of bounded operators on $X$.

 \begin{lemma}
 \label{lemma_propagator}
 Suppose $\varphi\in H^1(\R)$. Then there exists a unique propagator 
 $\{\mathcal U(t,s)\}_{t,s\in (0,\infty)}\subset \mathbb B(\mathcal H^1(\R))$ generated by $\mathcal H(t)$ satisfying the following properties:
 \begin{itemize}
 \item[(1)] $\mathcal U(t,s)=\mathcal U(t,r)\mathcal U(r,s)$ and $\mathcal U(t,t)=I$ for all $r,s,t >0$. 
 \item[(2)] The map $(0,\infty)^2\ni (t,s)\mapsto \mathcal U(t,s)\in \mathbb B(\mathcal H^1(\R))$ is strongly continuous;
 \item[(3)] For $\bvec \Psi_0\in \mathcal H^1(\R)$ , $\bvec\Psi(t,x)=\mathcal U(t,s)\bvec\Psi_0(x)$ solves \eqref{linearized_equation} in $\mathcal H^{-1}(\R)$. 
 \item[(4)] For $\psi_0\in L^2(\R)$, $\mathcal U(t,s)\vec \psi_0$ is given by $\vec \psi(t)$, where $\psi(t)$ solves \eqref{equation_psi_1} with $\psi(s)=\psi_0$. 
\end{itemize}
\end{lemma}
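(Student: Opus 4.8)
The plan is to treat $\mathcal{H}(t)=\mathcal{H}_0+\mathcal{V}(t)$ as a \emph{bounded} time-dependent perturbation of the self-adjoint free operator $\mathcal{H}_0=\diag(H_0,-H_0)$ and to construct $\mathcal{U}(t,s)$ by a Duhamel--Picard iteration in the interaction picture. Since $\mathcal{H}_0$ is self-adjoint on $\mathcal{L}^2$ with domain $\mathcal{H}^2$, the free group $e^{-it\mathcal{H}_0}=\diag(e^{-itH_0},e^{itH_0})$ is unitary on $\mathcal{L}^2$, and because $e^{\mp itH_0}$ commutes with $\partial_x$ it is unitary on $\mathcal{H}^1$ as well. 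The first task is therefore to record the mapping properties of the potential: I would show that for each $t>0$ the matrix $\mathcal{V}(t)$ is a bounded multiplication operator on both $\mathcal{L}^2$ and $\mathcal{H}^1$, that $t\mapsto\mathcal{V}(t)$ is norm-continuous on $(0,\infty)$, and that $\sup_{t\in[a,b]}\norm{\mathcal{V}(t)}_{\mathbb{B}(\mathcal{H}^1)}<\infty$ for every $[a,b]\subset(0,\infty)$.

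The point requiring a little care here is the $\mathcal{H}^1$-boundedness. Since $H^1(\R)$ is a Banach algebra, it suffices to verify that the entries of $\mathcal{V}(t)$, namely $|\varphi|^{2\sigma_j}$ and $|\varphi|^{2\sigma_j-2}v_{\mathrm p}(t)^2$, belong to $H^1(\R)$. Using $\varphi\in H^1\hookrightarrow L^\infty$, the identity $|v_{\mathrm p}|=|\varphi|$, and the hypothesis $\sigma>1$ (so that $s\mapsto s^\sigma$ is $C^1$ on $[0,\infty)$ and the fractional power $|\varphi|^{2\sigma-2}$ is always accompanied near the zeros of $\varphi$ by enough extra factors of $\varphi$, e.g.\ $|\varphi|^{2\sigma-2}\overline{\varphi}^2$), a direct differentiation shows each entry lies in $H^1$, with norms controlled by the explicit prefactors $t^{\sigma_j-2}$, $\log|t|$, $t^{\sigma-1}$, which are locally bounded on $(0,\infty)$. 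Norm-continuity in $t$ follows from the continuity in $t$ of these prefactors and of the phase in $v_{\mathrm p}(t)$.

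Granting these estimates, I would define $\mathcal{U}(t,s)$ as the unique solution of the Volterra equation $\mathcal{U}(t,s)=e^{-i(t-s)\mathcal{H}_0}-i\int_s^t e^{-i(t-r)\mathcal{H}_0}\mathcal{V}(r)\mathcal{U}(r,s)\,dr$, obtained by Picard iteration. The iteration converges absolutely in $\mathbb{B}(\mathcal{L}^2)$ and in $\mathbb{B}(\mathcal{H}^1)$ uniformly on compact time intervals, and Gr\"onwall's inequality yields $\norm{\mathcal{U}(t,s)}_{\mathbb{B}(\mathcal{X})}\le \exp\big(\int_s^t\norm{\mathcal{V}(r)}_{\mathbb{B}(\mathcal{X})}\,dr\big)$ for $\mathcal{X}\in\{\mathcal{L}^2,\mathcal{H}^1\}$. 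Uniqueness of the Volterra solution then gives $\mathcal{U}(t,t)=I$ and the cocycle identity (property (1)); strong continuity (property (2)) follows from the integral equation together with the strong continuity of the free group on $\mathcal{H}^1$; and differentiating the integral equation, using that $\mathcal{V}(r)\mathcal{U}(r,s)\bvec\Psi_0\in\mathcal{H}^1\hookrightarrow\mathcal{H}^{-1}$ and that $-i\mathcal{H}_0$ generates $e^{-it\mathcal{H}_0}$ on $\mathcal{H}^{-1}$, gives that $\mathcal{U}(t,s)\bvec\Psi_0$ solves \eqref{linearized_equation} in $\mathcal{H}^{-1}$ (property (3)). Property (4) follows from a scalar reduction: one checks by direct substitution that the subspaces of vectors of the form $\vec\psi$ and of the form $\mathcal{J}\vec\psi$ are each invariant under the flow, and that restricted to them \eqref{linearized_equation} is equivalent to a scalar Schr\"odinger equation of the form \eqref{equation_psi_1}; the uniqueness already established then identifies $\mathcal{U}(t,s)\mathcal{J}\vec\psi_0$ with $\mathcal{J}\vec\psi(t)$.

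The one genuinely non-routine feature to flag is that $\mathcal{H}(t)$ is \emph{not} self-adjoint: the off-diagonal entries of $\mathcal{V}$ are anti-symmetric rather than symmetric, so $\mathcal{U}(t,s)$ is not unitary and one cannot simply invoke the perturbation theory of unitary groups with self-adjoint generators. This is precisely why I would phrase the construction through the bounded-perturbation Duhamel series rather than a Stone/quadratic-form argument; the boundedness of $\mathcal{V}(t)$ (not merely relative boundedness) is what keeps the scheme elementary and makes the above convergence and Gr\"onwall estimates automatic. I stress that this lemma asserts only qualitative well-posedness on $(0,\infty)$: the delicate quantitative control of $\mathcal{U}(t,s)$ as $t\to+0$, uniform in the size of $\varphi$, is not addressed here and is instead the content of the modified-energy estimate \eqref{Idea_1}.
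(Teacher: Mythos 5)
Your proposal is correct and takes essentially the same route as the paper: the authors likewise construct $\mathcal U(t,s)$ by Picard iteration on the Duhamel--Volterra equation $\bvec\Psi(t)=e^{-i(t-s)\mathcal H_0}\bvec\Psi_0-i\int_s^te^{-i(t-r)\mathcal H_0}\mathcal V(r)\bvec\Psi(r)\,dr$, treating $\mathcal V(t)$ as a bounded multiplication on $\mathcal H^1$ (their bounds $\|\mathcal V(t)\|_{(L^\infty)^4}\lesssim t^{-1}$ and $\|\partial_x\mathcal V(t)\|_{(L^2)^4}\lesssim t^{-1-\ep}$ are the same input as your Banach-algebra estimate, and the $\sigma>1$ point you flag for $|\varphi|^{2\sigma-2}v_{\mathrm p}^2$ is exactly what their bound on $\partial_x v_{\mathrm p}$ handles), with (1)--(3) read off from the integral equation and (4) proved, as you do, by the invariance of the class $\mathcal J\vec\psi$ together with uniqueness of the Cauchy problem. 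The only cosmetic difference is that the paper keeps the explicit singular rates as $t\to+0$, yielding the quantitative bound $\|\bvec\Psi(t)\|_{\mathcal H^1}\le e^{C(s^{-\ep}+t^{-\ep})|t-s|}\|\bvec\Psi_0\|_{\mathcal H^1}$, whereas you work with local bounds on compact subintervals of $(0,\infty)$, which suffices for the lemma as stated.
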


\begin{proof}
We may assume $0<s<t<\infty$ without loss of generality. Taking into account that $\mathcal H_0$ is self-adjoint on $\mathcal L^2$, generating the unitary group $e^{-it\mathcal H_0}$, we observe that the solution $\bvec \Psi(t)=\mathcal U(t,s)\bvec \Psi_0$ to \eqref{linearized_equation} is given by the Duhamel formula 
\begin{align}
\label{lemma_propagator_proof_1}
\bvec\Psi(t,x)=e^{-i(t-s)\mathcal H_0}\bvec \Psi_0(x)-i\int_s^t e^{-i(t-r)\mathcal H_0}\mathcal V(r,x)\bvec \Psi(r,x)dr. 
\end{align}
To be precise, we define a sequence $\bvec \Psi_n\in C((0,\infty);\mathcal H^1(\R))$ by $\bvec \Psi_0(t):=\bvec\Psi_0$ and 
$$
\bvec \Psi_n(t,x):=e^{-i(t-s)\mathcal H_0}\bvec \Psi_0(x)-i\int_s^t e^{-i(t-r)\mathcal H_0}\mathcal V(r,x)\bvec \Psi_{n-1}(r,x)dr,\quad n\ge1. 
$$
Since $e^{-it\mathcal H_0}$ leaves Sobolev norms $\|\cdot\|_{\mathcal H^k}$ invariant for all $k\in \R$ and, for any $\ep>0$, 
\begin{align*}
\|\mathcal V(t)\|_{(L^\infty)^4}&\lesssim t^{-1}\|\varphi\|_{L^\infty}^{2}+t^{\sigma-2}\|\varphi\|_{L^\infty}^{2\sigma}\lesssim t^{-1},\\
\|\partial_x  \mathcal V(t)\|_{(L^2)^4}&\lesssim t^{-1-\ep}(1+\|\varphi\|_{L^\infty}^2)\|\partial_x  \varphi\|+t^{\sigma-2}(1+\|\varphi\|_{L^\infty}^{2\sigma})\|\partial_x  \varphi\|\lesssim t^{-1-\ep},
\end{align*}
where recalling the definition \eqref{pseudo_2} we have used the bound
$$
|\partial_x  v_{\mathrm p}|\lesssim \left(1+t^{-\ep}|\varphi|^2\right)|\partial_x  \varphi|+t^{\sigma-1}\left(1+|\varphi|^{2\sigma}\right)|\partial_x  \varphi|,
$$
we observe there exists $C>0$ independent of $s,t$ and $\bvec \Psi_0$ such that
\begin{align*}
\|\bvec \Psi_1(t)-\bvec\Psi_0\|_{\mathcal H^1}
&\le 
\int_s^t \left(\|\mathcal V(r)\|_{(L^\infty)^4}\|\bvec \Psi_{0}\|_{\mathcal H^1}+\|\partial_x  \mathcal V(r)\|_{(L^2)^4}\|\bvec \Psi_{0}\|_{\mathcal L^\infty}\right)dr\\
&\le C\|\bvec\Psi_0\|_{\mathcal H^1}\int_s^t r^{-1-\ep}dr\\
&\le Cs^{-1-\ep}(t-s)\|\bvec\Psi_0\|_{\mathcal H^1}.
\end{align*}
It follows from this estimate and the same argument that
$$
\|\bvec \Psi_2(t)-\bvec\Psi_1(t)\|_{\mathcal H^1}\le C^2s^{-1-\ep}\| \bvec \Psi_0\|_{\mathcal H^1}\int_s^t r^{-1-\ep}(r-s)dr\le \frac{C^2s^{-2-2\ep}(t-s)^2}{2}\|\bvec\Psi_0\|_{\mathcal H^1}.
$$
By iterating this procedure, we have for any $n=1,2,...$
\begin{align*}
\|\bvec \Psi_{n+1}(t)-\bvec \Psi_n(t)\|_{\mathcal H^1}
\le \frac{C^{n+1}s^{(-1-\ep)(n+1)}|t-s|^{n+1}\|\bvec \Psi_0\|_{\mathcal H^1}}{(n+1)!}. 
\end{align*}
Hence, the standard argument shows that $\{\bvec \Psi_{n}\}$ is a Cauchy sequence in $C([a,b],\mathcal H^1(\R))$ for any $0<a<b<\infty$. Since $a$ and $b$ are arbitrarily,  \eqref{lemma_propagator_proof_1} thus admits a unique global solution $\bvec \Psi \in C((0,\infty);\mathcal H^1(\R))$ satisfying 
\begin{align}
\label{lemma_propagator_proof_2}\|\bvec \Psi(t)\|_{\mathcal H^1}\le e^{Cs^{-1-\ep}|t-s|}\|\bvec \Psi_0\|_{\mathcal H^1}\end{align}
with some $C>0$ depending on $\|\varphi\|_{H^1}$. The items (1), (2) and (3) easily follow from the  Duhamel formula \eqref{lemma_propagator_proof_1}. The item (4) follows by a direct computation. Indeed, if we denote by $\psi(t)$ the first component of $\mathcal U(t,s)\vec \psi_0$, then $\psi(t)$ solves \eqref{equation_psi_1}. Moreover, $\vec\psi(t)$ solves \eqref{linearized_equation} with $\vec\psi(s)=\vec\psi_0$. Thus, the uniqueness of the Cauchy problem implies $\mathcal U(t,s)\vec \psi_0=\vec \psi(t)$. 
\end{proof}

Using the above notations, \eqref{equation_v_2} can be written as the following nonlinear system
\begin{align}
\left(i\partial_t -\mathcal H(t)\right)(\vec v_*-\vec e_1)
=\mathcal J(\vec G[v_*]+\vec e_2),
\end{align}
where $\vec v_*=(v_*,\overline{v_*})^{\mathrm T}$, $\vec G[v_*]=(G[v_*],\overline{G[v_*]})^{\mathrm T}$ and
$$
G[v_*](t)=\sum_{j=1}^2\lambda_j t^{\sigma_j-2}G_{\sigma_j}[v_*(t)]
$$
with $G_{\sigma_j}[v_*]$ given by \eqref{G_sigma_j}, $\vec e_j=(e_j,\overline{e_j})^{\mathrm T}$, $e_1(t)=\mathcal R(t)v_\mathrm p(t)$ and 
\begin{align*}
e_2(t)&=\sum_{j=1}^2\lambda_j t^{\sigma_j-2}\left\{-\mathcal R(t)|v_\mathrm p(t)|^{2\sigma_j}v_\mathrm p(t)+(\sigma_j+1)|\varphi|^{2\sigma_j}e_1(t)+ \sigma_j |\varphi|^{2\sigma_j-2}v_{\mathrm p}(t)^2\overline{e_1(t)}\right\}. 
\end{align*}
With the condition $\|v_*(t)\|\to 0$ as $t\to+ 0$ at hand, we finally arrive at the integral equation: 
\begin{align}
\label{integral_equation}
\vec v_*(t)=\vec e_1(t)-\int_0^t \mathcal U(t,s)\left\{\vv{(iG)}[v_*](s)+\vv{(ie_2)}(s)\right\}ds,
\end{align}
where we used the formula $i\mathcal J\vec a=(ia,-i\overline a)^{\mathrm T}=\vv{(ia)}$. 

\subsection{Energy estimates for the linearized equation}
\label{subsection_energy}
The goal of the rest of the paper is to construct a unique global solution to \eqref{integral_equation}. To this end, we prove a key energy estimate for $\mathcal U(t,,s)\vec \psi_0$, which is the most important step in this paper. Thanks to the item (4) in Lemma \ref{lemma_propagator}, it is enough to deal with the solution to  \eqref{equation_psi_1}. We begin with the following energy identity: 

\begin{lemma}[Modified energy identity]
\label{lemma_energy_identity}
Suppose $\lambda_1,\lambda_2,\alpha\in \R$ and $\varphi\in H^1(\R)$. Define
\begin{align}
\nonumber
\widetilde{Q}_{\alpha}[\psi](t)
&=Q_\alpha[\psi](t)+\lambda_2\sigma_2 t^{\sigma_2-2}\||\varphi|^{\sigma_2-1}\Re[\overline{v_{\mathrm p}(t)}\psi(t)]\|^2\\
\label{widetilde_Q}
&=\frac{\|\partial_x  \psi(t)\|^2}{4}+t^{-\alpha} \|\psi(t)\|^2+\sum_{j=1}^2\lambda_j\sigma_j t^{\sigma_j-2}\||\varphi|^{\sigma_j-1}\Re[\overline{v_{\mathrm p}(t)}\psi(t)]\|^2. 
\end{align}
Then, for any $H^1$-solution $\psi(t)$ to \eqref{equation_psi_1} and  $t>0$, 
\begin{align}
\nonumber
\frac{d}{dt}\widetilde{Q}_{\alpha}[\psi]
\nonumber
&=
-\alpha t^{-\alpha-1} \|\psi\|^2
-\sum_{j=1}^2\lambda_j\sigma_j(2-\sigma_j) t^{\sigma_j-3}\||\varphi|^{\sigma_j-1}\Re[\overline{v_{\mathrm p}}\psi]\|^2\\
\nonumber
&\quad -4\sum_{j=1}^2\lambda_j\sigma_j t^{\sigma_j-2-\alpha}\<|\varphi|^{2\sigma_j-2}\Re[\overline{v_{\mathrm p}}\psi],\Im[\overline{v_{\mathrm p}}\psi]\>\\
\label{lemma_energy_identity_2}
&\quad 
-\sum_{j=1}^2\lambda_j\sigma_j  t^{\sigma_j-2}\Im\<\partial_x  \psi,|\varphi|^{2\sigma_j-2}\Re[\varphi\overline{\partial_x  \varphi}]\psi\>.
\end{align}
\end{lemma}

\begin{proof}
It is easy to see from \eqref{equation_psi_1} that
\begin{align}
\nonumber
\frac{d}{dt}\left(t^{-\alpha} \|\psi\|^2\right)
&=-\alpha t^{-\alpha-1} \|\psi\|^2+2\sum_{j=1}^2\lambda_j\sigma_j  t^{\sigma_j-2-\alpha}\Im\<|\varphi|^{2\sigma_j-2}v_{\mathrm p}^2\overline \psi,\psi\>\\
\label{equation_psi_3}
&=-\alpha t^{-\alpha-1} \|\psi\|^2+4\sum_{j=1}^2\lambda_j\sigma_j  t^{\sigma_j-2-\alpha}\<|\varphi|^{2\sigma_j-2}\Re[\overline{v_{\mathrm p}}\psi],\Im[\overline{v_{\mathrm p}}\psi]\>,
\end{align}
where we used $\Im[z^2]=2\Re z\Im z$. 
We next calculate both sides of the identity 
\begin{align}
\Re \<\text{LHS of }\eqref{equation_psi_1},\partial_t \psi\>=\Re \<\text{RHS of }\eqref{equation_psi_1},\partial_t \psi\>.
\label{equation_psi_4}
\end{align}
A direct computation yields that
\begin{align}
\label{equation_psi_4_1}
\Re\<i\partial_t \psi-H_0\psi,\partial_t \psi\>=-\frac14\frac{d}{dt}\left\|\partial_x  \psi\right\|^2,\quad
\Re\<|\varphi|^{2\sigma_j}\psi,\partial_t\psi\>=\frac12\frac{d}{dt}\left\||\varphi|^{\sigma_j} \psi\right\|^2.
\end{align}
Moreover, by using \eqref{v_p} and the expression \eqref{equation_psi_2}, we have
\begin{align}
\nonumber
&\Re\<|\varphi|^{2\sigma_j-2}v_{\mathrm p}\Re[\overline{v_\mathrm p}\psi],\partial_t \psi\>\\
\nonumber
&=\Re\<|\varphi|^{2\sigma_j-2}\Re[\overline{v_\mathrm p}\psi],\partial_t (\overline{v_{\mathrm p}}\psi)\>
-\Re\<|\varphi|^{2\sigma_j-2}\Re[\overline{v_\mathrm p}\psi],(\partial_t \overline{v_{\mathrm p}})\psi\>\\
\nonumber
&=\<|\varphi|^{2\sigma_j-2}\Re[\overline{v_\mathrm p}\psi],\partial_t \Re[\overline{v_{\mathrm p}}\psi]\>
-\sum_{k=1}^2\Re\<|\varphi|^{2\sigma_j-2}\Re[\overline{v_\mathrm p}\psi],i\lambda_k t^{\sigma_k-2}|\varphi|^{2\sigma_k}\overline{v_{\mathrm p}}\psi\>\\
\label{equation_psi_4_2}
&=\frac12\frac{d}{dt}\left\||\varphi|^{\sigma_j-1}\Re[\overline{v_\mathrm p}\psi]\right\|^2
+\sum_{k=1}^2\lambda_k  t^{\sigma_k-2}\<|\varphi|^{2(\sigma_j+\sigma_k)-2}\Re[\overline{v_\mathrm p}\psi],\Im[\overline{v_\mathrm p}\psi]\>.
\end{align}
To be precise, in order to make sense the quantity $\<i\partial_t \psi-H_0\psi,\partial_t\psi\>$, we first regard $\<\cdot,\cdot\>$ as the coupling $\<\cdot,\cdot\>_{H^{-1},H^1}$, replace $\partial_t \psi\in H^{-1}$ in the second entries of each three terms in the LHS of \eqref{equation_psi_4_1} and \eqref{equation_psi_4_2} by $(1+\ep H_0)^{-1}\partial_t\psi\in H^1$ and then take the limit $\ep\to +0$. This is possible since $|\varphi|^{2\sigma_j}\psi\in H^1,|\varphi|^{2\sigma_j-2}v_{\mathrm p}\Re[\overline{v_\mathrm p}\psi]\in H^1$, and $(1+\ep H_0)^{-1/2}$ commutes with $i\partial_t-H_0$. Plugging \eqref{equation_psi_4_1} and \eqref{equation_psi_4_2} into \eqref{equation_psi_4} implies
\begin{align}
\nonumber
-\frac{1}{4}\frac{d}{dt}\|\partial_x \psi\|^2&=\sum_{j=1}^2\lambda_j t^{\sigma_j-2}\frac{d}{dt}\left(\frac{\||\varphi|^{\sigma_j} \psi\|^2}{2}+\sigma_j \||\varphi|^{\sigma_j-1}\Re[\overline{v_{\mathrm p}}\psi]\|^2\right)\\
\label{equation_psi_5}
&\quad +2\sum_{j,k=1}^2\lambda_j\lambda_k \sigma_j  t^{\sigma_j+\sigma_k-4}\<|\varphi|^{2(\sigma_j+\sigma_k)-2}\Re[\overline{v_\mathrm p}\psi],\Im[\overline{v_\mathrm p}\psi]\>.
\end{align}
Similarly, calculating both sides of 
$$
\Im \<\text{LHS of }\eqref{equation_psi_1},|\varphi|^{2\sigma_j}\psi\>=\Im \<\text{RHS of }\eqref{equation_psi_1},|\varphi|^{2\sigma_j}\psi\>
$$
implies
\begin{align*}
&\frac12\frac{d}{dt}\||\varphi|^{\sigma_j} \psi\|^2-\Im\<\partial_x \psi,\sigma_j|\varphi|^{2\sigma_j-2}\Re[\overline{\varphi}\partial_x\varphi]\psi\>\\
&=-2\sum_{k=1}^2{\lambda_k\sigma_k}{t^{\sigma_k-2}}\<\Re[\overline{v_{\mathrm p}}\psi],|\varphi|^{2(\sigma_k+\sigma_j)-2}\Im[\overline{v_{\mathrm p}}\psi]\>.
\end{align*}
Hence the last term in \eqref{equation_psi_5} is rewritten as
\begin{align}
\nonumber
&2\sum_{j,k=1}^2\lambda_j\lambda_k \sigma_j  t^{\sigma_j+\sigma_k-4}\<|\varphi|^{2(\sigma_j+\sigma_k)-2}\Re[\overline{v_\mathrm p}\psi],\Im[\overline{v_\mathrm p}\psi]\>\\
\label{equation_psi_6}
&=\sum_{j=1}^2\lambda_j t^{\sigma_j-2}\left(-\frac{d}{dt}\frac{\left\||\varphi|^{\sigma_j} \psi\right\|^2}{2}
+\sigma_j \Im\<\partial_x \psi,|\varphi|^{2\sigma_j-2}\Re[\varphi\overline{\partial_x\varphi}]\psi\>\right).
\end{align}
Plugging \eqref{equation_psi_6} into \eqref{equation_psi_5} then implies
\begin{align*}
\nonumber
-\frac{1}{4}\frac{d}{dt}\|\partial_x \psi\|^2
&=\sum_{j=1}^2\lambda_j\sigma_j t^{\sigma_j-2}\left(\frac{d}{dt} \||\varphi|^{\sigma_j-1}\Re[\overline{v_{\mathrm p}}\psi]\|^2
+\Im\<\partial_x \psi,|\varphi|^{2\sigma_j-2}\Re[\varphi\overline{\partial_x \varphi}]\psi\>\right)\\
&=\sum_{j=1}^2\lambda_j\sigma_j\left( \frac{d}{dt}\left\{t^{\sigma_j-2}\||\varphi|^{\sigma_j-1}\Re[\overline{v_{\mathrm p}}\psi]\|^2\right\}+(2-\sigma_j)t^{\sigma_j-3}\||\varphi|^{\sigma_j-1}\Re[\overline{v_{\mathrm p}}\psi]\|^2\right)\\
&\quad 
+\sum_{j=1}^2\lambda_j \sigma_j t^{\sigma_j-2}\Im\<\partial_x \psi,|\varphi|^{2\sigma_j-2}\Re[\varphi\overline{\partial_x \varphi}]\psi\>,
\end{align*}
which is equivalent to
\begin{align}
\nonumber
&\frac{d}{dt}\left(\frac{\|\partial_x \psi\|^2}{4}+\sum_{j=1}^2\lambda_j\sigma_j t^{\sigma_j-2}\||\varphi|^{\sigma_j-1}\Re[\overline{v_{\mathrm p}}\psi]\|^2\right)\\
\nonumber
&=-\sum_{j=1}^2\lambda_j\sigma_j(2-\sigma_j)t^{\sigma_j-3}\||\varphi|^{\sigma_j-1}\Re[\overline{v_{\mathrm p}}\psi]\|^2\\
&\quad -\sum_{j=1}^2\lambda_j \sigma_j t^{\sigma_j-2}\Im\<\partial_x \psi,|\varphi|^{2\sigma_j-2}\Re[\varphi\overline{\partial_x \varphi}]\psi\>
\label{equation_psi_8}. 
\end{align}
\eqref{lemma_energy_identity_2} now follows from  \eqref{equation_psi_3} and \eqref{equation_psi_8}. 
\end{proof}

The above energy identity leads the following energy estimate: 

\begin{lemma}[Key energy estimate]
\label{lemma_energy_estimate}
Let $\lambda_1>0$, $\lambda_2\in \R$,  $0<\alpha<1$ and $\varphi\in H^1(\R)$. Then, there exists $C>0$ such that for any solution $\psi\in C((0,1],H^1(\R))$ to \eqref{equation_psi_1} and $0<s\le t\le 1$, \begin{align}
\label{lemma_energy_estimate_1}
Q_\alpha[\psi(t)](t)\le C Q_\alpha[\psi(s)](s). 
\end{align}
\end{lemma}

\begin{proof}
Note that $\mathcal U(t,s)\vec \psi_0=(\psi,\overline\psi)^{\mathrm T}$ with $\psi_0=\psi(s)$ and thus $Q_\alpha[\mathcal U(t,s)\vec \psi_0](t)=2Q_\alpha[\psi(t)](t)$ by Lemma \ref{lemma_propagator} (4) and that $Q_\alpha[f](t)\sim \|f\|_{H^1}$ for each $t>0$ since $\lambda_1>0$. Thus, \eqref{lemma_propagator_proof_2} implies that for any $t_0>0$ there exists $C>0$ such that
$$
Q_\alpha[\psi(t)](t)\le C\|\mathcal U(t,s)\vec \psi_0\|_{\mathcal H^1}\le C\|\vec \psi_0\|_{\mathcal H^1}\le C\|\psi(s)\|_{H^1}\le CQ_\alpha[\psi(s)](s)
$$
for $t_0<s<t\le1$. It thus suffices to prove the lemma for sufficiently small $s\le t$. We set for short 
$
D_j[\psi](t)=\sigma_j\||\varphi|^{\sigma_j-1}\Re[\overline{v_{\mathrm p}(t)}\psi(t)]\|^2
$. 
Since $\sigma_1=1$ and $\sigma_2=\sigma>1$, we have for sufficiently small $t_0>0$ and any $t$ satisfying $0<t\le t_0$, 
\begin{align}
\label{energy_proof_0}
|\lambda_2|(2-\sigma_2)t^{\sigma-2}D_2[\psi](t)
\le |\lambda_2|\sigma_2(2-\sigma_2)  t^{\sigma-1} t^{-1}\|\varphi\|_{L^\infty}^{\sigma_2-1}D_1[\psi](t)<\frac{\lambda_1t^{-1}D_1[\psi](t)}{4}.
\end{align}
In particular, $C^{-1} Q_\alpha[\psi](t)\le \widetilde{Q}_{\alpha}[\psi](t)\le CQ_\alpha[\psi](t)$ and $\widetilde{Q}_{\alpha}[\psi](t)-\|\partial_x\psi(t)\|^2/4$ is positive if $t>0$ is small enough. To deal with the third term of the RHS of \eqref{lemma_energy_identity_2}, since $\alpha<\sigma_j$ and thus $t^{\sigma_j-2-\alpha}\le  t^{-1-\alpha}=t^{\frac{1-\alpha}{2}}t^{-\frac{\alpha+1}{2}}t^{-1}$ for $j=1,2$ and $0<t\le1$, we obtain by Young's inequality,   
\begin{align}
\nonumber
&\sum_{j=1}^2\left|2 \lambda_j t^{\sigma_j-2-\alpha}\<|\varphi|^{2\sigma_j-2}\Re[\overline{v_{\mathrm p}(t)}\psi(t)],\Im[\overline{v_{\mathrm p}(t)}\psi(t)]\>\right|\\
\nonumber
&\le C t^{\frac{1-\alpha}{2}}\cdot t^{-\frac{\alpha+1}{2}} \|\psi(t)\| \cdot t^{-1}\|\Re[\overline{v_{\mathrm p}(t)}\psi(t)]\|\\
\label{energy_proof_1}
&\le \frac{\alpha t^{-\alpha-1}\|\psi(t)\|^2}{4}+\frac{\lambda_1  t^{-2}D_1[\psi](t)}{4}
\end{align}
for $0<t\le t_0$ by taking $t_0$ further small if necessary. For the last term of \eqref{lemma_energy_identity_2}, we use Gagliardo--Nirenberg's inequality (see \cite{Cazenave}): 
\begin{align}
\label{GN}
\|f\|_{L^\infty}\lesssim \|\partial_x  f\|^{1/2} \|f\|^{1/2},\quad f\in H^1(\R), 
\end{align}
which, combined with Young's inequality, implies
$$
\|f\|_{L^\infty}\le C_1 \left(\delta\|\partial_xf\|+\delta^{-1}\|f\|\right)
$$
with some $C_1>0$ and for all $\delta>0$. Therefore, for any $\delta,\rho>0$, 
\begin{align*}
t^{-1}|\<\partial_x  \psi(t),\Re[\varphi\overline{\partial_x  \varphi}]\psi(t)\>|
&\le t^{-1}\|\partial_x  \psi(t)\| \|\varphi\|_{L^\infty}\|\partial_x  \varphi\|\|\psi(t)\|_{L^\infty}\\
&\le C_1 t^{-1}\|\partial_x  \psi(t)\|\left(\delta \|\partial_x  \psi(t)\|+\delta^{-1}\|\psi(t)\|\right)\\
&= C_1t^{-1}\delta \|\partial_x  \psi(t)\|^2+C_1\delta^{-1}t^{(\alpha-1)/2} \|\partial_x  \psi(t)\|\cdot t^{-(\alpha+1)/2}\|\psi(t)\|\\
&\le C_1  t^{-1}\delta \|\partial_x  \psi(t)\|^2+C_1\left(\rho^{-1}\delta^{-2}t^{\alpha-1} \|\partial_x  \psi(t)\|^2+\rho t^{-\alpha-1}\|\psi(t)\|^2\right)
\end{align*}
Let $\rho=(4C_1\lambda_1)^{-1}\alpha$. Then
there exists $C>0$ independent of $t$ such that
\begin{align}
\nonumber
\left|\lambda_1 t^{-1}\Im\<\partial_x  \psi(t),\Re[\varphi\overline{\partial_x  \varphi}]\psi(t)\>\right|
&\le C\left(t^{-1}\delta+\delta^{-2}t^{\alpha-1} \right)\|\partial_x  \psi(t)\|^2+\frac{\alpha t^{-\alpha-1}\|\psi(t)\|^2}{4}\\
\label{energy_proof_2}
&\le \frac{Ct^{\alpha/3-1}\|\partial_x  \psi(t)\|^2}{8}+\frac{\alpha t^{-\alpha-1}\|\psi(t)\|^2}{4},
\end{align}
where we take $\delta=t^{\alpha/3}$ so that 
$
t^{-1}\delta=\delta^{-2}t^{\alpha-1} =t^{\alpha/3-1}
$. Since $\sigma_2>1$, we similarly obtain
\begin{align}
\left|\lambda_2 t^{\sigma_2-2}\Im\<\partial_x  \psi(t),|\varphi|^{2\sigma_2-2}\Re[\varphi\overline{\partial_x  \varphi}]\psi(t)\>\right|
\label{energy_proof_3}
&\le \frac{Ct^{\alpha/3-1}\|\partial_x  \psi(t)\|^2}{8}+\frac{\alpha t^{-\alpha-1}\|\psi(t)\|^2}{4},
\end{align}
It follows from \eqref{energy_proof_0}, \eqref{energy_proof_1}, \eqref{energy_proof_2}, \eqref{energy_proof_3} and the modified energy identity \eqref{lemma_energy_identity_2} that
\begin{align*}
\frac{d}{dt}\widetilde{Q}_{\alpha}[\psi](t)
&\le -\alpha t^{-\alpha-1}\|\psi(t)\|^2-\lambda_1 t^{-2}D_1[\psi](t)\\
&\quad +\frac{\lambda_1 t^{-2}D_1[\psi](t)}{2}+\frac{Ct^{\alpha/3-1}\|\partial_x  \psi(t)\|^2}{4}+\frac{3\alpha t^{-\alpha-1}\|\psi(t)\|^2}{4}\\
&\le Ct^{\alpha/3-1}\widetilde{Q}_{\alpha}[\psi](t),
\end{align*}
where we have used the positivity of $\widetilde{Q}_{\alpha}[\psi]-\|\partial_x\psi\|^2/4$. This inequality implies for $0<s\le t\le t_0$, 
$$
\widetilde{Q}_{\alpha}[\psi](t)\le \exp\left(C\int_s^t r^{\alpha/3-1}dr\right)\widetilde{Q}_{\alpha}[\psi](s)\lesssim \widetilde{Q}_{\alpha}[\psi](s)
$$ and \eqref{lemma_energy_estimate_1} follows since $Q_\alpha[\psi](t)\sim \widetilde{Q}_{\alpha}[\psi](t)$ for $0<t\le t_0$. 
\end{proof}

By this lemma and Lemma \ref{lemma_propagator} (4), we obtain the following estimate for $\mathcal U(t,s)$: 
\begin{corollary}
\label{corollary_energy_estimate}
Under the same condition in Lemma \ref{lemma_energy_estimate}, for all $\psi_0\in H^1(\R)$, 
$$
Q_\alpha[\mathcal U(t,s)\vec \psi_0](t)\lesssim Q_\alpha[\psi_0](s)
$$
uniformly in $0<s\le t\le 1$, where $
Q_\alpha[(f_1,f_2)^\mathrm T](t):=Q_\alpha[f_1(t)]+Q_\alpha[\overline{f_2(t)}]
$ for $(f_1,f_2)^{\mathrm T}\in \mathcal H^1(\R)$. 
\end{corollary}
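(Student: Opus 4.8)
The plan is to reduce this vector-valued bound to the scalar energy estimate \eqref{lemma_energy_estimate_1} of Lemma \ref{lemma_energy_estimate}, exploiting that $Q$ on $\mathcal H^1(\R)$ splits over the two components of $\mathcal J\vec\psi_0$. First I would invoke Lemma \ref{lemma_propagator} (4) to write, for $\psi_0\in H^1(\R)$,
$$
\mathcal U(t,s)\mathcal J\vec\psi_0=\mathcal J\vec\psi(t)=(\psi(t),-\overline{\psi(t)})^{\mathrm T},
$$
where $\psi(t)$ is the $H^1$-solution to \eqref{equation_psi_1} with $\psi(s)=\psi_0$. By the definition $Q[(f_1,f_2)^{\mathrm T}]=Q[f_1]+Q[f_2]$, this gives the identity
$$
Q[\mathcal U(t,s)\mathcal J\vec\psi_0](t)=Q[\psi(t)]+Q[-\overline{\psi(t)}],
$$
so it remains only to control the second summand by the first.

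The core of the argument is then a $t$-independent pointwise comparison $Q[-\overline{\psi}]\lesssim Q[\psi]$. Inspecting the definition \eqref{Q_1} of $Q$ (which, for this corollary, involves only $D_1$, since the term $\lambda_2 t^{\sigma_2-2}D_{\sigma_2}$ enters $Q_1$ but not $Q$), I note that replacing $\psi$ by $-\overline{\psi}$ leaves the kinetic and $L^2$ terms unchanged, $\|\partial_x(-\overline{\psi})\|=\|\partial_x\psi\|$ and $\|-\overline{\psi}\|=\|\psi\|$, and leaves $\||\varphi|\,\psi\|^2$ unchanged; the only altered quantity is the cross term of $D_1$, which becomes $\|\Re[v_{\mathrm p}\psi]\|^2$ in place of $\|\Re[\overline{v_{\mathrm p}}\psi]\|^2$. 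Since $|v_{\mathrm p}|=|\varphi|$, both $|\Re[\overline{v_{\mathrm p}}\psi]|$ and $|\Re[v_{\mathrm p}\psi]|$ are bounded by $|\varphi|\,|\psi|$, whence each is dominated by $\||\varphi|\,\psi\|^2$; hence $D_1[-\overline{\psi}]\le 2D_1[\psi]$. Because $\lambda_1>0$ makes all three terms of $Q$ nonnegative, this yields $Q[-\overline{\psi}]\le 2Q[\psi]$ with an absolute constant.

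Finally I would chain the two displays with \eqref{lemma_energy_estimate_1}: for $0<s\le t\le1$,
$$
Q[\mathcal U(t,s)\mathcal J\vec\psi_0](t)\le 3\,Q[\psi(t)]\lesssim Q[\psi(s)]=Q[\psi_0](s),
$$
where the middle inequality is exactly Lemma \ref{lemma_energy_estimate}, whose constant is uniform in $0<s\le t\le1$. As the comparison constant in the previous step is independent of $s,t$, the resulting estimate is uniform, which is the claim. I expect no genuine obstacle here: the entire analytic difficulty is already resolved in Lemma \ref{lemma_energy_estimate}, and the only care required in this corollary is the bookkeeping of conjugation and sign in the second component of $\mathcal J\vec\psi$, in particular verifying that the modified cross term in $D_1$ remains controlled by $\||\varphi|\,\psi\|^2$.
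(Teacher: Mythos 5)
Your proof is correct and follows exactly the route the paper intends: the paper states the corollary as an immediate consequence of Lemma \ref{lemma_energy_estimate} combined with Lemma \ref{lemma_propagator} (4), and your argument simply makes explicit the omitted bookkeeping, namely that for the second component one has $D_1[-\overline{\psi}]\le 2D_1[\psi]$ since $|\Re[v_{\mathrm p}\psi]|\le|\varphi|\,|\psi|$, whence $Q[-\overline{\psi}]\le 2Q[\psi]$ by nonnegativity of the three terms when $\lambda_1>0$. No gap: chaining this pointwise comparison with the scalar estimate \eqref{lemma_energy_estimate_1} gives the claimed bound uniformly in $0<s\le t\le1$.
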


\begin{proof}
Lemma \ref{lemma_propagator} (4) implies $\mathcal U(t,s)\vec \psi_0=(\psi,\overline\psi)^{\mathrm T}$ with the solution $\psi$ to \eqref{equation_psi_1} with $\psi(s)=\psi_0$. Thus, $Q_\alpha[\mathcal U(t,s)\vec \psi_0](t)=2Q_\alpha[\psi(t)](t)$ and the desired estimate follows from Lemma \ref{lemma_energy_estimate}. 
\end{proof}

\section{Proof of Theorem \ref{theorem_1}}
\label{section_proof_Theorem_1}

Using the materials prepared in the previous section, we here prove Theorem \ref{theorem_1}. Let $\Phi[\vec v_*](t)$ be the RHS of the integral equation \eqref{integral_equation}. We shall show that $\Phi$ is a contraction on the following complete metric space $\mathcal X(T,\alpha,\beta,M)$ for sufficiently small $T>0$: 
\begin{equation}
\label{X}
\begin{aligned}
\mathcal X&=\mathcal X(T,\alpha,\beta,M)=\{\vec f=(f,\overline f)^{\mathrm T}\in C((0,T];\mathcal H^1(\R))\ |\ \|\vec f\|_\mathcal X\le M\}, \\
\|\vec f\|_\mathcal X&=\sup_{0<t\le T} t^{-\beta}\left(\|\partial_x f\|+t^{-\alpha/2}\|f\|\right),\quad
d_\mathcal X(\vec f,\vec g)=\|\vec f-\vec g\|_\mathcal X. 
\end{aligned}
\end{equation}
Since $\|\partial_x f\|+t^{-\alpha/2}\|f\|\lesssim \sqrt{Q_\alpha[f](t)}$ with $Q_\alpha[f]$ defined by \eqref{Q_1}, Corollary \ref{corollary_energy_estimate} and \eqref{integral_equation} imply
\begin{align}
\|\Phi[\vec v_*]\|_{\mathcal X}
\label{Phi}
&\lesssim \|\vec e_1\|_{\mathcal X}+\sup_{0<t\le T} t^{-\beta}\int_0^t \left(\sqrt{Q_\alpha\big[iG[v_*]\big](s)}+\sqrt{Q_\alpha\big[ie_2\big](s)}\right)ds,\\
\|\Phi[\vec v_1]-\Phi[\vec v_2]\|_{\mathcal X}
\label{Phi-Phi}
&\lesssim \sup_{0<t\le T} t^{-\beta}\int_0^t \sqrt{Q_\alpha\big[iG[v_1]-iG[v_2]\big](s)}ds
\end{align}
for $\vec v_*,\vec v_{1},\vec v_{2}\in \mathcal X(T,\alpha,\beta,M)$. 
Let us begin to deal with the nonlinear term $G$: 

\begin{proposition}
\label{proposition_G}
Let $\varphi\in H^1(\R)$ and $\alpha>0$. Then, for any $0<t\le T$ and $\vec v_*,\vec v_1,\vec v_2\in \mathcal X(T,\alpha,\beta,M)$, 
\begin{align}
\sqrt{Q_\alpha\big[iG[v_*]\big](t)}
\label{proposition_G_1}
&\lesssim t^{2\beta+\min\{\alpha/4,3\alpha/4-1/2\}-1}\<M\>^{2\sigma+1},\\
\sqrt{Q_\alpha\big[iG[v_1]-iG[v_2]\big](t)}
\label{proposition_G_2}
&\lesssim t^{2\beta+\min\{\alpha/4,3\alpha/4-1/2\}-1}\<M\>^{2\sigma}
\|\vec v_1-\vec v_2\|_{\mathcal X}.
\end{align}
\end{proposition}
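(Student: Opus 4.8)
\emph{Overall approach.} The plan is to treat $\sqrt{Q[\,\cdot\,]}$ as an $\mathbb R$-seminorm read off from \eqref{Q_1}: each of the blocks $\|\partial_x\,\cdot\,\|$, $t^{-\alpha/2}\|\cdot\|$, $t^{-1/2}\||\varphi|\,\cdot\,\|$ and $t^{-1/2}\|\Re[\overline{v_{\mathrm p}}\,\cdot\,]\|$ is $\mathbb R$-linear in its argument, so $\sqrt{Q[\cdot]}$ obeys the triangle inequality; moreover the last block is dominated by the third since $|\Re[\overline{v_{\mathrm p}}\psi]|\le|\varphi||\psi|$. I would therefore split $G[v_*]=\sum_{j=1}^2\lambda_j t^{\sigma_j-2}G_{\sigma_j}[v_*]$, pull out the prefactors $|\lambda_j|t^{\sigma_j-2}$, and bound each $\sqrt{Q[G_{\sigma_j}[v_*]]}$ separately. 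The term $j=1$ (where $\sigma_1=1$ gives the most singular prefactor $t^{-1}$) is dominant, while the term $j=2$ carries the extra decay $t^{\sigma-1}$ and the highest nonlinear power $|v_*|^{2\sigma+1}$, which is what produces the factor $\<M\>^{2\sigma+1}$ in \eqref{proposition_G_1}.

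\emph{Pointwise bounds and Gagliardo--Nirenberg.} From the integral representation \eqref{G_sigma_j}, using $|v_\theta|\le|\varphi|+|v_*|$, $|v_{\mathrm p}|=|\varphi|$ and $2\sigma_j-2\ge0$, and differentiating under the $\theta$-integral (observing that each $\theta$-remainder vanishes at $\theta=0$), I would record
\begin{align*}
|G_{\sigma_j}[v_*]|&\lesssim |\varphi|^{2\sigma_j-1}|v_*|^2+|v_*|^{2\sigma_j+1},\\
|\partial_x G_{\sigma_j}[v_*]|&\lesssim (|\varphi|+|v_*|)^{2\sigma_j-1}|v_*|\,|\partial_x v_*|+(|\varphi|+|v_*|)^{2\sigma_j-2}|v_*|^2\,|\partial_x v_{\mathrm p}|.
\end{align*}
The key structural point is that $\partial_x v_{\mathrm p}$ always appears multiplied by $|v_*|^2$. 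Reading off the $\mathcal X$-norm \eqref{X} gives $\|\partial_x v_*\|\lesssim Mt^\beta$, $\|v_*\|\lesssim Mt^{\beta+\alpha/2}$, $\||\varphi|v_*\|\lesssim Mt^{\beta+1/2}$, and via \eqref{GN} the two decisive $L^\infty$ bounds
$$
\|v_*\|_{L^\infty}\lesssim\|\partial_x v_*\|^{1/2}\|v_*\|^{1/2}\lesssim Mt^{\beta+\alpha/4},\qquad \||\varphi|v_*\|_{L^\infty}\lesssim\|\partial_x(|\varphi|v_*)\|^{1/2}\||\varphi|v_*\|^{1/2}\lesssim Mt^{\beta+1/4},
$$
the second using $\|\partial_x(|\varphi|v_*)\|\lesssim\|\partial_x\varphi\|\,\|v_*\|_{L^\infty}+\|\varphi\|_{L^\infty}\|\partial_x v_*\|\lesssim Mt^\beta$.

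\emph{Term-by-term estimation.} Feeding these into the three relevant building blocks, I would bound each monomial in $L^2$ by attaching every available factor $|\varphi|$ to a copy of $v_*$ (to use $\||\varphi|v_*\|_{L^\infty}\lesssim Mt^{\beta+1/4}$ or $\||\varphi|v_*\|\lesssim Mt^{\beta+1/2}$), placing the remaining $v_*$-factors in $L^\infty$ via $\|v_*\|_{L^\infty}\lesssim Mt^{\beta+\alpha/4}$, and leaving the lowest-order factor in $L^2$. Collecting powers, the quadratic contributions produce $t^{2\beta+1/4-1}$ whenever a weighted $L^\infty$ bound can be used and $t^{2\beta+\alpha/2-1}$ when it cannot; the worse of these two is exactly the exponent $t^{2\beta+\min\{\alpha/2,1/4\}-1}$ in \eqref{proposition_G_1}, and all higher-order contributions carry an extra factor $t^\beta$ (or the extra decay $t^{\sigma-1}$ from $j=2$) and are subordinate. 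Summing over $j$ and over the monomials gives \eqref{proposition_G_1}. The Lipschitz bound \eqref{proposition_G_2} follows from the identical scheme applied to $G_{\sigma_j}[v_1]-G_{\sigma_j}[v_2]$: an integral/mean-value representation of this difference isolates one factor $v_1-v_2$ while the remaining factors have total degree $2\sigma_j$, so the same estimates yield $t^{2\beta+\min\{\alpha/2,1/4\}-1}\<M\>^{2\sigma}\|\vec v_1-\vec v_2\|_{\mathcal X}$.

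\emph{Main obstacle.} The genuinely delicate point is the derivative block $\|\partial_x G_{\sigma_j}[v_*]\|$ and, within it, the factor $\partial_x v_{\mathrm p}$. By the computation in the proof of Lemma \ref{lemma_propagator}, $|\partial_x v_{\mathrm p}|\lesssim\big(1+|\log t|\,|\varphi|^2+t^{\sigma-1}|\varphi|^{2\sigma}\big)|\partial_x\varphi|$, and its two most dangerous pieces behave oppositely. The bare part $|\partial_x\varphi|$ pairs with $|v_*|^2$ that has no spare weight; since $\varphi\in H^1$ forces $\partial_x\varphi$ into $L^2$ only, both copies of $v_*$ must go into the unweighted $L^\infty$, producing precisely the $\alpha/2$ branch $t^{2\beta+\alpha/2-1}$. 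The logarithmic part $|\log t|\,|\varphi|^2|\partial_x\varphi|$ instead combines with $|v_*|^2$ to form $(|\varphi|v_*)^2$, handled by the weighted $L^\infty$ bound, so that the loss $|\log t|\lesssim t^{-\delta}$ (arbitrarily small $\delta>0$) is absorbed thanks to the strictly positive gain $1/2-\min\{\alpha/2,1/4\}>0$. Getting both the correct weight distribution and this logarithmic absorption right is, I expect, the only nonroutine part; everything else is bookkeeping of powers of $t$ and $M$.
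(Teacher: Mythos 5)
Your proposal is correct and follows essentially the same route as the paper's proof: the same reduction of $\sqrt{Q}$ to the blocks $\|\partial_x\cdot\|$, $t^{-\alpha/2}\|\cdot\|$, $t^{-1/2}\|\varphi\,\cdot\|$ (with the $\Re[\overline{v_{\mathrm p}}\,\cdot\,]$ block dominated by the weighted one), the same Gagliardo--Nirenberg bounds $\|v_*\|_{L^\infty}\lesssim Mt^{\beta+\alpha/4}$ and $\|\varphi v_*\|_{L^\infty}\lesssim Mt^{\beta+1/4}$, and the same treatment of the dangerous $\partial_x v_{\mathrm p}$ terms, with the bare $|\partial_x\varphi|$ piece forcing the $\alpha/2$ branch and the $|\log t|\,|\varphi|^2|\partial_x\varphi|$ piece absorbed by the strict gain $1/2-\min\{\alpha/2,1/4\}>0$, exactly as in the paper. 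The Lipschitz estimate \eqref{proposition_G_2} is only sketched in your write-up, but the isolation of one factor $v_1-v_2$ with the remaining degree $2\sigma_j$ is precisely the mechanism of the paper's (much longer) explicit computation, so no idea is missing.
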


\begin{proof}
Recall that  $G[v_*]=\lambda_1t^{\sigma_1-2}G_{\sigma_1}+\lambda_2t^{\sigma_2-2}G_{\sigma_2}
$ with $\sigma_1=1$, $\sigma_2=\sigma$ and $G_{\sigma_j}=G_{\sigma_j}[v_*]$ defined by \eqref{G_sigma_j}. Let us deal with $G_\sigma$ for general $\sigma\ge1$ defined by
\begin{align}
\label{G_proof_1}
G_\sigma[v_*]:=(\sigma+1)v_{*}\int_0^1\left(|v_\theta|^{2\sigma}-|v_{\mathrm p}|^{2\sigma}\right)d\theta+\sigma\overline{v_{*}}\int_0^1\left(|v_\theta|^{2\sigma-2}v_\theta^2-|v_{\mathrm p}|^{2\sigma-2}v_{\mathrm p}^2\right)d\theta. 
\end{align}
In what follows, we frequently use the inequalities: 
\begin{align}
\left||v_\theta|^{2\sigma}-|v_{\mathrm p}|^{2\sigma}\right|+\left||v_\theta|^{2\sigma-2}v_\theta^2-|v_{\mathrm p}|^{2\sigma-2}v_{\mathrm p}^2\right|
\label{G_proof_4_1}
&\lesssim |v_*|^{2\sigma}+|\varphi|^{2\sigma-1}|v_*|,\\
\label{G_proof_4_2}
\left||v_\theta|^{2\sigma-1-n}v_\theta^n-|v_{\mathrm p}|^{2\sigma-1-n}v_{\mathrm p}^n\right|
&\lesssim |v_*|^{2\sigma-1}+|\varphi|^{2\sigma-2}|v_*|,\quad n=0,1,2,...,
\end{align}
which follow from the following elementary inequalities (see \cite[Lemma 2.4]{Ginibre_Velo_1989}):
\begin{align*}
\left||z_1|^{p-n}z_1^n-|z_2|^{p-n}z_2^n\right|\lesssim |z_1-z_2|^p+(|z_1|^{p-1}+|z_2|^{p-1})|z_1-z_2|,\quad p\ge1,\ z_1,z_2\in \C.  
\end{align*}

Now we show \eqref{proposition_G_1}. With the definition \eqref{Q_1} of $Q_\alpha$ at hand, we need to estimate $\|G\|$, $\|\Re[\overline{v_{\mathrm p}}iG]\|$ and $\|\partial_x G\|$. For the latter two norms, plugging \eqref{G_proof_4_1} into \eqref{G_proof_1} implies
\begin{align}
\label{G_proof_5}
\|\Re[\overline{v_{\mathrm p}}iG_\sigma[v_*]]\|\lesssim \|G_\sigma[v_*]\|
\lesssim \||v_*|^{2\sigma+1}\|+\||\varphi|^{2\sigma-1}|v_*|^2\|
\lesssim \left(\|v_*\|_{L^\infty}^{2\sigma}+\| v_*\|_{L^\infty}\right)\|v_*\|. 
\end{align}
To deal with the term $\partial_x G[v_*]$, calculating
\begin{align*}
\partial_x \left(|v_\theta|^{2\sigma}-|v_{\mathrm p}|^{2\sigma}\right)
&=\sigma |v_\theta|^{2\sigma-2}\left(v_\theta\overline{\partial_x v_\theta}+\overline{v_\theta}\partial_x v_\theta\right)-\sigma |v_{\mathrm p}|^{2\sigma-2}\left(v_{\mathrm p}\overline{\partial_x v_{\mathrm p}}+\overline{v_{\mathrm p}}\partial_x v_{\mathrm p}\right),\\
&=2\sigma \Re\left[|v_{\theta}|^{2\sigma-2}v_\theta-|v_{\mathrm p}|^{2\sigma-2}v_{\mathrm p}\right]\overline{\partial_x v_\theta}+2\sigma \Re\big[|v_{\mathrm p}|^{2\sigma-2}v_{\mathrm p}\overline{\partial_x(v_\theta-v_{\mathrm p})}\big]
\end{align*}
and 
\begin{align*}
&\partial_x \left(|v_\theta|^{2\sigma-2}v_\theta^2-|v_{\mathrm p}|^{2\sigma-2}v_{\mathrm p}^2\right)\\
&=(\sigma+1)\left\{|v_\theta|^{2\sigma-2}v_\theta\partial_x v_\theta-|v_{\mathrm p}|^{2\sigma-2}v_{\mathrm p}\partial_x v_{\mathrm p}\right\}
+(\sigma-1)\left\{|v_\theta|^{2\sigma-4}v_\theta^3\overline{\partial_x v_\theta}-|v_{\mathrm p}|^{2\sigma-4}v_{\mathrm p}^3\overline{\partial_x v_{\mathrm p}}\right\}\\
&=(\sigma+1)\left(|v_\theta|^{2\sigma-2}v_\theta-|v_{\mathrm p}|^{2\sigma-2}v_{\mathrm p}\right)\partial_x v_\theta
+\theta (\sigma+1)|v_{\mathrm p}|^{2\sigma-2}v_{\mathrm p}\partial_xv_*\\
&\quad +(\sigma-1)\left(|v_\theta|^{2\sigma-4}v_\theta^3-|v_{\mathrm p}|^{2\sigma-4}v_{\mathrm p}^3\right)\overline{\partial_x v_\theta}+\theta (\sigma-1)|v_{\mathrm p}|^{2\sigma-4}v_{\mathrm p}^3\overline{\partial_xv_*},
\end{align*}
we similarly obtain by using \eqref{G_proof_4_1} and \eqref{G_proof_4_2} that
\begin{align*}
&\left|\partial_x\{v_* (|v_\theta|^{2\sigma}-|v_{\mathrm p}|^{2\sigma})\}\right|+\left|\partial_x\{\overline{v_*}(|v_\theta|^{2\sigma-2}v_\theta^2-|v_{\mathrm p}|^{2\sigma-2}v_{\mathrm p}^2)\}\right|\\
&\le |\partial_x v_*|\left(\left||v_\theta|^{2\sigma}-|v_{\mathrm p}|^{2\sigma}\right|+\left||v_\theta|^{2\sigma-2}v_\theta^2-|v_{\mathrm p}|^{2\sigma-2}v_{\mathrm p}^2\right|\right)\\
&\quad +|v_*|\left\{\left|\partial_x \left(|v_\theta|^{2\sigma}-|v_{\mathrm p}|^{2\sigma}\right)\right|+\left|\partial_x \left(|v_\theta|^{2\sigma-2}v_\theta^2-|v_{\mathrm p}|^{2\sigma-2}v_{\mathrm p}^2\right)\right|\right\}\\
&\lesssim |\partial_x v_*|\left(|v_*|^{2\sigma}+|\varphi|^{2\sigma-1}|v_*|\right)
+|v_*|\left(|v_*|^{2\sigma-1}+|\varphi|^{2\sigma-2}|v_*|\right)|\partial_x v_\theta|+|v_*||\varphi|^{2\sigma-1}|\partial_x v_*|
\\
&\lesssim \left(|v_*|^{2\sigma}+|\varphi|^{2\sigma-2}|v_*|^2+|\varphi|^{2\sigma-1}|v_*|\right)|\partial_xv_*|
+|\partial_x v_{\mathrm p}|\left(|v_*|^{2\sigma}+|\varphi|^{2\sigma-2}|v_*|^2\right).
\end{align*}
Since $v_{\mathrm p}(t)=\overline{\varphi}e^{-i\lambda_1 |\varphi|^2\log t-i\lambda_2|\varphi|^{2\sigma}t^{\sigma-1}/(\sigma-1)}$, this inequality yields
\begin{align*}
|\partial_x G_\sigma[v_*]|
&\lesssim \left\{|v_*|^{2\sigma}+|\varphi|^{2\sigma-1}|v_*|+|\varphi|^{2\sigma-2}|v_*|^2\right\}|\partial_xv_*|\\
&\quad +\left(1+|\varphi|^{2\sigma}+| \log t| |\varphi|^2\right)|\partial_x\varphi|\left(|v_*|^{2\sigma}+|\varphi|^{2\sigma-2}|v_*|^2\right).
\end{align*}
This implies
\begin{align}
\nonumber
\|\partial_xG_\sigma[v_*]\|
\nonumber
&\lesssim \left(\|v_*\|_{L^\infty}^{2\sigma}+\| v_*\|_{L^\infty}^2+\|v_*\|_{L^\infty}\right)\|\partial_x v_*\|\\
\nonumber
&\quad +\|\partial_x\varphi\|\{(1+\|\varphi\|_{L^\infty}^{2\sigma})\|v_*\|_{L^\infty}^{2\sigma}+|\log t| \|\varphi v_*\|_{L^\infty}^2\|v_*\|_{L^\infty}^{2\sigma-2}\}\\
\nonumber
&\quad +\|\partial_x\varphi\|\{(1+\|\varphi\|_{L^\infty}^{2\sigma})\|\varphi\|_{L^\infty}^{2\sigma-2}\|v_*\|_{L^\infty}^{2}+ | \log t | \|\varphi v_*\|_{L^\infty}^2\|\varphi\|_{L^\infty}^{2\sigma-2}\}\\
\nonumber
&\lesssim \left(\|v_*\|_{L^\infty}^{2\sigma}+\|v_*\|_{L^\infty}^2+\| v_*\|_{L^\infty}\right)\|\partial_x v_*\|
+\|v_*\|_{L^\infty}^{2\sigma}+\|v_*\|_{L^\infty}^{2}\\
\label{G_proof_7}
&\quad +|\log t | \| v_*\|_{L^\infty}^{2}(1+\|v_*\|_{L^\infty}^{2\sigma-2}). 
\end{align}
To estimate the RHS of \eqref{G_proof_5} and \eqref{G_proof_7}, we use the following three bounds: 
\begin{align}
\label{G_proof_2}
\|v_*\|\le t^{\beta+\alpha/2}M,\quad \|\partial_x  v_*\|\le t^\beta M,\quad \|v_{*} \|_{L^{\infty}} 
\lesssim \| \partial_x   v_{\ast} \|^{1/2} \| v_{\ast} \|^{1/2}\lesssim t^{\beta+\alpha/4}M. 
\end{align}
It follows from \eqref{G_proof_5}--\eqref{G_proof_2} that
\begin{align*}
t^{-\alpha/2}\|G_\sigma[v_*]\|
&\lesssim t^{(2\sigma+1)\beta+\sigma \alpha/2}M^{2\sigma+1}+t^{2\beta+\alpha/4}M^2
\lesssim t^{2\beta+\alpha/4}\<M\>^{2\sigma+1},\\
t^{-1/2}\|\Re[\overline{v_{\mathrm p}}  G_\sigma[v_*]]\|
&\lesssim t^{2\beta+3\alpha/4-1/2}\<M\>^{2\sigma+1},\\
\|\partial_x  G_\sigma[v_*]\|
\nonumber
&\lesssim t^{(2\sigma+1)\beta+\sigma\alpha/2}M^{2\sigma+1}
+t^{3\beta+\alpha/2}M^3
+t^{2\beta+\alpha/4}M^2
+t^{2\sigma(\beta+\alpha/4)}M^{2\sigma}\\
&\quad +t^{2\beta+\alpha/2}M^2
+|\log t|t^{2\beta+\alpha/2}M^{2}
+|\log t|t^{2\sigma\beta+\sigma\alpha/2}M^{2\sigma}\\
&\lesssim t^{2\beta+\alpha/4}\<M\>^{2\sigma+1},
\end{align*}
and \eqref{proposition_G_1} follows. 

Next we prove \eqref{proposition_G_2}. Setting $v_{j\theta}=v_{\mathrm p}+\theta v_j$ for short, we write
\begin{align*}
&G_\sigma[v_1]-G_\sigma[v_2]
=(\sigma+1)\int_0^1\left\{v_1\left(|v_{1\theta}|^{2\sigma}-|v_{\mathrm p}|^{2\sigma}\right)-v_2\left(|v_{2\theta}|^{2\sigma}-|v_{\mathrm p}|^{2\sigma}\right)\right\}d\theta\\
&\quad +\sigma \int_0^1\left\{\overline {v_1}\left(|v_{1\theta}|^{2\sigma-2}v_{1\theta}^2-|v_{\mathrm p}|^{2\sigma-2}v_{\mathrm p}^2\right)-\overline{v_2}\left(|v_{2\theta}|^{2\sigma-2}v_{2\theta}^2-|v_{\mathrm p}|^{2\sigma-2}v_{\mathrm p}^2\right)\right\}d\theta.
\end{align*}
We shall only deal with the second integral which we denote by $I_\sigma[v_1,v_2]$ for short, the proof for the first one being analogous. By \eqref{G_proof_4_1}, the integrand of $I_\sigma[v_1,v_2]$ is estimated as
\begin{align*}
&\left|\overline{v_1}\left(|v_{1\theta}|^{2\sigma-2}v_{1\theta}^2-|v_{\mathrm p}|^{2\sigma-2}v_{\mathrm p}^2\right)
-\overline{v_2}\left(|v_{2\theta}|^{2\sigma-2}v_{2\theta}^2-|v_{\mathrm p}|^{2\sigma-2}v_{\mathrm p}^2\right)\right|\\
&\lesssim |v_1-v_2|(|v_1|^{2\sigma}+|\varphi|^{2\sigma-1}|v_1|)
+|v_2|\left\{|v_1-v_2|^{2\sigma}+(|v_1|^{2\sigma-1}+|v_2|^{2\sigma-1}+|\varphi|^{2\sigma-1})|v_1-v_2|\right\}\\
&\lesssim \sum_{j=1}^2\left(|v_j|^{2\sigma}+|v_j|\right)|v_1-v_2|. 
\end{align*}
Hence, it follows from \eqref{G_proof_2} that 
\begin{align}
\nonumber
&t^{-\alpha/2}\|I_\sigma[v_1,v_2]\|+t^{-1/2}\|\Re[\overline{v_{\mathrm p}} iI_\sigma[v_1,v_2]]\|\\
\nonumber
& \lesssim t^{\beta+\alpha/2-1/2} \sum_{j=1,2}  \left( \| v_j \|_{L^{\infty}}^{2 \sigma}  + \|  v_j  \| _{L^{\infty}}  \right)  (1+t^{})\| v_1- v_2\|_{\mathcal X}\\
\label{G_proof_11}
&\lesssim t^{2\beta+3\alpha/4-1/2}\<M\>^{2\sigma}
\| v_1- v_2\|_{\mathcal X}.
\end{align}
Moreover, the derivative of the integrand can be written as
\begin{align*}
&\partial_x \left\{\overline {v_1}\left(|v_{1\theta}|^{2\sigma-2}v_{1\theta}^2-|v_{\mathrm p}|^{2\sigma-2}v_{\mathrm p}^2\right)-\overline{v_2}\left(|v_{2\theta}|^{2\sigma-2}v_{2\theta}^2-|v_{\mathrm p}|^{2\sigma-2}v_{\mathrm p}^2\right)\right\}\\
&=\overline{(\partial_x v_1-\partial_x v_2)}\left(|v_{1\theta}|^{2\sigma-2}v_{1\theta}^2-|v_{\mathrm p}|^{2\sigma-2}v_{\mathrm p}^2\right)+\overline {\partial_x v_2}\left(|v_{1\theta}|^{2\sigma-2}v_{1\theta}^2-|v_{2\theta}|^{2\sigma-2}v_{2\theta}^2\right)\\
&\quad
+(\sigma+1)\overline{v_1}\left(|v_{1\theta}|^{2\sigma-2}v_{1\theta}-|v_{\mathrm p}|^{2\sigma-2}v_{\mathrm p}\right)\partial_x v_{1\theta}
+(\sigma+1)\overline{v_1}|v_{\mathrm p}|^{2\sigma-2}v_{\mathrm p}(\partial_x v_{1\theta}-\partial_x v_{\mathrm p})\\
&\quad 
-(\sigma+1)\overline{v_2}\left(|v_{2\theta}|^{2\sigma-2}v_{2\theta}-|v_{\mathrm p}|^{2\sigma-2}v_{\mathrm p}\right)\partial_x v_{2\theta}
-(\sigma+1)\overline{v_2}|v_{\mathrm p}|^{2\sigma-2}v_{\mathrm p}(\partial_x v_{2\theta}-\partial_x v_{\mathrm p})\\
&\quad 
+(\sigma-1)\overline{v_1}\left(|v_{1\theta}|^{2\sigma-4}v_{1\theta}^3-|v_{\mathrm p}|^{2\sigma-4}v_{\mathrm p}^3\right)\overline{\partial_x v_{1\theta}}
+(\sigma-1)\overline{v_1}|v_{\mathrm p}|^{2\sigma-4}v_{\mathrm p}^3\overline{(\partial_x v_{1\theta}-\partial_x v_{\mathrm p})}\\
&\quad -(\sigma-1)\overline{v_2}\left(|v_{2\theta}|^{2\sigma-4}v_{2\theta}^3-|v_{\mathrm p}|^{2\sigma-4}v_{\mathrm p}^3\right)\overline{\partial_x v_{2\theta}}
-(\sigma-1)\overline{v_2}|v_{\mathrm p}|^{2\sigma-4}v_{\mathrm p}^3\overline{(\partial_x v_{2\theta}-\partial_x v_{\mathrm p})}\\
&=\overline{(\partial_x v_1-\partial_x v_2)}\left(|v_{1\theta}|^{2\sigma-2}v_{1\theta}^2-|v_{\mathrm p}|^{2\sigma-2}v_{\mathrm p}^2\right)
+\overline {\partial_x v_2}\left(|v_{1\theta}|^{2\sigma-2}v_{1\theta}^2-|v_{2\theta}|^{2\sigma-2}v_{2\theta}^2\right)\\
&\quad
+(\sigma+1)\left(\overline{v_1}\partial_x v_{1\theta}-\overline{v_2}\partial_x v_{2\theta}\right)\left(|v_{1\theta}|^{2\sigma-2}v_{1\theta}-|v_{\mathrm p}|^{2\sigma-2}v_{\mathrm p}\right)\\
&\quad
+(\sigma+1)\overline{v_2}\partial_x v_{2\theta}\left(|v_{1\theta}|^{2\sigma-2}v_{1\theta}-|v_{2 \theta}|^{2\sigma-2}v_{2 \theta}\right)
+\theta (\sigma+1)(\overline{v_1}\partial_xv_1-\overline{v_2}\partial_xv_2)|v_{\mathrm p}|^{2\sigma-2}v_{\mathrm p}\\
&\quad
+(\sigma-1)\overline{(v_1\partial_x v_{1\theta}-v_2\partial_x v_{2\theta})}\left(|v_{1\theta}|^{2\sigma-4}v_{1\theta}^3-|v_{\mathrm p}|^{2\sigma-4}v_{\mathrm p}^3\right)\\
&\quad
+(\sigma-1)\overline{v_2\partial_x v_{2\theta}}\left(|v_{1\theta}|^{2\sigma-4}v_{1\theta}^3-|v_{2 \theta}|^{2\sigma-4}v_{2 \theta}^3\right)
+\theta(\sigma-1)(\overline{v_1\partial_xv_1}-\overline{v_2\partial_xv_2})|v_{\mathrm p}|^{2\sigma-4}v_{\mathrm p}^3. 
\end{align*}
Applying again \eqref{G_proof_4_1} and \eqref{G_proof_2}, we  obtain
\begin{align}
\nonumber
&\|(\partial_x v_1-\partial_x v_2)(|v_{1\theta}|^{2\sigma-2}v_{1\theta}^2-|v_{\mathrm p}|^{2\sigma-2}v_{\mathrm p}^2)\|\\
\label{G_proof_12}
& \lesssim \|\partial_x(v_1-v_2)\|\left(\|v_1\|_{L^\infty}^{2\sigma}+\|v_1\|_{L^\infty}\right)
\lesssim t^{2\beta+\alpha/4}\<M\>^{2\sigma}\|v_1-v_2\|_{\mathcal X},\\
\nonumber
&\| {\partial_x v_2}(|v_{1\theta}|^{2\sigma-2}v_{1\theta}^2-|v_{2\theta}|^{2\sigma-2}v_{2\theta}^2)\|\\
\nonumber
& \lesssim \|\partial_x v_2\|\left\{\|v_1-v_2\|_{L^\infty}^{2\sigma}+\left(\|v_1\|_{L^\infty}^{2\sigma-1}+\|v_2\|_{L^\infty}^{2\sigma-1}\right)\|v_1-v_2\|_{L^\infty}\right\}\\
\label{G_proof_13}
& \lesssim t^{2\sigma \beta+\sigma \alpha/2}\<M\>^{2\sigma}\|v_1-v_2\|_{\mathcal X}. 
\end{align}
Similarly, using \eqref{G_proof_4_2} instead of \eqref{G_proof_4_1}, we observe for $n=1,3$ that
\begin{align*}
&\left|\left({v_1}\partial_x v_{1\theta}-{v_2}\partial_x v_{2\theta}\right)\left(|v_{1\theta}|^{2\sigma-1-n}v_{1\theta}^n-|v_{\mathrm p}|^{2\sigma-1-n}v_{\mathrm p}^n\right)\right|\\
&\lesssim \left\{|v_1-v_2|\left(|\partial_xv_1|+|\partial_xv_{\mathrm p}|\right)+|v_2||\partial_x(v_1-v_2)|\right\}\left(|v_1|^{2\sigma-1}+|\varphi|^{2\sigma-2}|v_1|\right),
\end{align*}
which, combined with \eqref{G_proof_2}, yields
\begin{align}
\nonumber
&\|\left(\overline{v_1}\partial_x v_{1\theta}-\overline{v_2}\partial_x v_{2\theta}\right)(|v_{1\theta}|^{2\sigma-2}v_{1\theta}-|v_{\mathrm p}|^{2\sigma-2}v_{\mathrm p})\|\\
\nonumber
&\quad +\|{(v_1\partial_x v_{1\theta}-v_2\partial_x v_{2\theta})}(|v_{1\theta}|^{2\sigma-4}v_{1\theta}^3-|v_{\mathrm p}|^{2\sigma-4}v_{\mathrm p}^3)\|
\\
\nonumber
&\lesssim \|v_1-v_2\|_{L^\infty}\left\{(\|\partial_xv_1\|+1)\left(\|v_1\|_{L^\infty}^{2\sigma-1}+\|v_1\|_{L^\infty}\right)+|\log t | \|v_1\|_{L^\infty}^2(\|v_1\|_{L^\infty}^{2\sigma-2}+1)\right\}\\
\label{G_proof_14}
&\lesssim t^{2\beta+\alpha /4}\<M\>^{2\sigma}\|v_1-v_2\|_{\mathcal X},
\end{align}
and, similarly
\begin{align}
\nonumber
&\|(\overline{v_1}\partial_xv_1-\overline{v_2}\partial_xv_2)|v_{\mathrm p}|^{2\sigma-2}v_{\mathrm p}\|
+\|(\overline{v_1\partial_xv_1}-\overline{v_2\partial_xv_2})|v_{\mathrm p}|^{2\sigma-4}v_{\mathrm p}^3\|\\
\label{G_proof_15}
&\lesssim t^{2\beta+\alpha /4}\<M\>^{2\sigma}\|v_1-v_2\|_{\mathcal X}. 
\end{align}
Finally, since $\sigma\ge1$, the same argument also shows 
\begin{align*}
&\left|{v_2}\partial_x v_{2\theta}(|v_{1\theta}|^{2\sigma-2}v_{1\theta}-|v_{2 \theta}|^{2\sigma-2}v_{2 \theta})\right|\\
&\lesssim |v_2|\left(|\partial_x v_2|+|\partial_x v_{\mathrm p}|\right)\left\{|v_1-v_2|^{2\sigma-1}+(|v_1|^{2\sigma-2}+|v_2|^{2\sigma-2})|v_1-v_2|\right\}\\
&\lesssim \left(|v_1|^{2\sigma-1}+|v_2|^{2\sigma-1}\right)\left\{|\partial_x v_2|+(| \log t | |\varphi|^2+1)|\partial_x \varphi|\right\}|v_1-v_2|. 
\end{align*}
Hence, the same argument as above based on \eqref{G_proof_2} implies
\begin{align}
\nonumber
&\|\overline{v_2}\partial_x v_{2\theta}(|v_{1\theta}|^{2\sigma-2}v_{1\theta}-|v_{2 \theta}|^{2\sigma-2}v_{2 \theta})\|
+\|\overline{v_2\partial_x v_{2\theta}}(|v_{1\theta}|^{2\sigma-4}v_{1\theta}^3-|v_{2 \theta}|^{2\sigma-4}v_{2 \theta}^3)\|\\
\label{G_proof_16}
&\lesssim t^{2\beta+\alpha/4}\<M\>^{2\sigma}\>\|v_1-v_2\|_{\mathcal X}.
\end{align}
By \eqref{G_proof_12}--\eqref{G_proof_16}, 
we obtain
\begin{align*}
\|\partial_x  I_\sigma[v_1,v_2]\|\lesssim t^{2\beta+\alpha/4}\<M\>^{2\sigma}\|v_1-v_2\|_{\mathcal X},
\end{align*}
which, together with \eqref{G_proof_11}, shows the desired bound \eqref{proposition_G_2}. This completes the proof. 
\end{proof}

\begin{remark}
\label{remark_G}
For the proof of \eqref{G_proof_13}--\eqref{G_proof_16}, the condition $\sigma\ge1$ is crucial. When $1/2<\sigma<1$, we only have a similar estimate as \eqref{proposition_G_2} with $\|\vec v_1-\vec v_2\|_{\X}$ replaced by $\|\vec v_1-\vec v_2\|_{\X}^{2\sigma-1}$. 
\end{remark}

Next, for the error terms $e_1$ and $e_2$, we have: 

\begin{proposition}
\label{proposition_e_1}
Let $\alpha>0$, $\nu>0$ and $\delta\le1$. Then, for any $\varphi\in H^{\max\{2\delta,1\}}$ and $0<t\le 1$, 
\begin{align*}
\sqrt{Q_\alpha[e_1](t)}\lesssim t^{\delta-\max\{1/2,\alpha/2\}-\nu},\quad 
\sqrt{Q_\alpha[ie_2](t)}\lesssim |\lambda_1| t^{\delta-\max\{1/2,\alpha/2\}-1-\nu}
\end{align*}
\end{proposition}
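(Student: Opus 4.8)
The plan is to reduce the modified energy to three elementary norms and then feed in the smoothing bound for $\mathcal R(t)$. Since $|v_{\mathrm p}|=|\varphi|$, we have $\|\Re[\overline{v_{\mathrm p}}f]\|\le\||\varphi|f\|$, hence $D_1[f]\lesssim\||\varphi|f\|^2$ and therefore
$$
\sqrt{Q[f]}\lesssim\|\partial_x f\|+t^{-\alpha/2}\|f\|+t^{-1/2}\||\varphi|f\|,\qquad 0<t\le1,
$$
so it suffices to control these three quantities for $f=e_1$ and $f=e_2$. Throughout I would use the smoothing bound $\|\mathcal R(t)g\|\lesssim t^{\delta'}\|g\|_{H^{2\delta'}}$ for $0\le\delta'\le1$ recorded before \eqref{equation_v_1}, together with $\varphi\in L^\infty$ (automatic since $H^{2\delta}\hookrightarrow L^\infty$ for $2\delta>1/2$, which is the only relevant range).

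For $e_1=\mathcal R(t)v_{\mathrm p}$: because $\partial_x$ commutes with $H_0$, hence with $e^{-itH_0}$ and with $\mathcal R(t)$, one has $\partial_x e_1=\mathcal R(t)\partial_x v_{\mathrm p}$. Applying the smoothing bound with $\delta'=\delta$ to the $L^2$ and $|\varphi|$-weighted pieces gives $t^{-\alpha/2}\|e_1\|+t^{-1/2}\||\varphi|e_1\|\lesssim(t^{-\alpha/2}+t^{-1/2})t^{\delta}\|v_{\mathrm p}\|_{H^{2\delta}}$, while choosing $\delta'=\delta-1/2$ (legitimate in the range $2\delta\ge1$ actually used) yields $\|\partial_x e_1\|\lesssim t^{\delta-1/2}\|v_{\mathrm p}\|_{H^{2\delta}}$. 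Since $\alpha/2\le\max\{1/2,\alpha/2\}$ and $1/2\le\max\{1/2,\alpha/2\}$, all three are bounded by $t^{\delta-\max\{1/2,\alpha/2\}}\|v_{\mathrm p}\|_{H^{2\delta}}$, so the $e_1$-bound follows once I establish $\|v_{\mathrm p}(t)\|_{H^{2\delta}}\lesssim t^{-\nu}$.

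For $e_2$: recalling the definition, write $e_2=\sum_{j=1}^2\lambda_j t^{\sigma_j-2}\{-\mathcal R(t)g_j+(\sigma_j+1)|\varphi|^{2\sigma_j}e_1+\sigma_j|\varphi|^{2\sigma_j-2}v_{\mathrm p}^2\overline{e_1}\}$ with $g_j=|\varphi|^{2\sigma_j}v_{\mathrm p}$. The term $\mathcal R(t)g_j$ is handled exactly as $e_1$ with $v_{\mathrm p}$ replaced by $g_j$, giving $\sqrt{Q[\mathcal R(t)g_j]}\lesssim t^{\delta-\max\{1/2,\alpha/2\}}\|g_j\|_{H^{2\delta}}$. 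Each remaining term is $e_1$ multiplied by an $L^\infty$ function whose $x$-derivative lies in $L^2$ (here $\varphi\in H^1$ is used); the derivative falling on the multiplier produces a factor $\|e_1\|_{L^\infty}\lesssim\|e_1\|_{H^1}\lesssim\sqrt{Q[e_1]}$ (using $t^{-\alpha/2}\ge1$), so $\sqrt{Q[|\varphi|^{2\sigma_j}e_1]}\lesssim\sqrt{Q[e_1]}$, and likewise for the $v_{\mathrm p}^2\overline{e_1}$ term since $|\varphi|^{2\sigma_j-2}|v_{\mathrm p}|^2=|\varphi|^{2\sigma_j}$. Multiplying by the prefactor $t^{\sigma_j-2}$, the index $j=1$ (with $\sigma_1=1$, coefficient $\lambda_1$, prefactor $t^{-1}$) dominates, while the $j=2$ contribution carries an extra gain $t^{\sigma-1}\to0$; this yields $\sqrt{Q[e_2]}\lesssim|\lambda_1|t^{\delta-\max\{1/2,\alpha/2\}-1-\nu}$ once the Sobolev bounds $\|v_{\mathrm p}\|_{H^{2\delta}},\|g_j\|_{H^{2\delta}}\lesssim t^{-\nu}$ are in hand.

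The only substantial point, and the main obstacle, is precisely these fractional Sobolev bounds for the relevant range $2\delta\in(1,1+\ep]$. Writing $v_{\mathrm p}=\overline\varphi\,e^{i\Theta}$ with $\Theta=-\lambda_1|\varphi|^2\log|t|-\tfrac{\lambda_2}{\sigma-1}|\varphi|^{2\sigma}t^{\sigma-1}$, I would estimate these products via the fractional Leibniz rule together with a composition (fractional chain) estimate for $e^{i\Theta}$, controlling $\Theta$ and the nonlinear powers $|\varphi|^{2\sigma},|\varphi|^{2\sigma-2}$ in $H^{2\delta}\cap L^\infty$ using $\varphi\in L^\infty$ and $1<\sigma<2$. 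The factors $\log|t|$ and $t^{\sigma-1}$ generated when derivatives hit the phase are the only source of growth as $t\to+0$, and since $(\log(1/t))^{C}$ is bounded by $t^{-\nu}$ for arbitrarily small $\nu>0$, the desired estimates follow. This is the fractional-regularity counterpart of the single-derivative bound $|\partial_x v_{\mathrm p}|\lesssim(1+t^{-\ep}|\varphi|^2)|\partial_x\varphi|+t^{\sigma-1}(1+|\varphi|^{2\sigma})|\partial_x\varphi|$ already used in the proof of Lemma \ref{lemma_propagator}, and carrying it out carefully at regularity $2\delta$ is where the real work lies.
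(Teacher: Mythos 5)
Your proposal is correct and follows essentially the same route as the paper: reduce $\sqrt{Q[\cdot]}$ to the norms $\|\partial_x f\|+t^{-\alpha/2}\|f\|+t^{-1/2}\||\varphi|f\|$, apply the smoothing bound $\|\mathcal R(t)g\|_{H^s}\lesssim t^{\delta'}\|g\|_{H^{2\delta'+s}}$ (the paper's Lemma \ref{lemma_R}) with exponent $\delta$ for the $L^2$ pieces and $\delta-1/2$ for the derivative piece, handle $e_2$ termwise with the $j=1$ contribution dominant, and reduce everything to $\|v_{\mathrm p}(t)\|_{H^{2\delta}}\lesssim t^{-\nu}$ with logarithmic losses absorbed into $t^{-\nu}$. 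The only difference is one of packaging at the step you rightly flag as the real work: instead of a direct fractional Leibniz/chain-rule computation for $e^{i\Theta}$ with the full phase, the paper splits off the bounded phase $e^{-i\gamma_2|\varphi|^{2\sigma}}$ (using $|\gamma_2|\lesssim1$ for $\sigma>1$) and then invokes a known Hayashi--Naumkin-type composition estimate (its Lemma \ref{lemma_v_p}), obtaining the explicit $\langle\gamma\rangle^{\lceil s\rceil}$ dependence on the log-growing parameter $\gamma_1=-\lambda_1\log|t|$ by the rescaling $\widetilde\varphi=\gamma^{1/(2\sigma)}\varphi$, which delivers exactly the $\langle\log t\rangle^{\lceil s\rceil}\lesssim t^{-\nu}$ loss you anticipated.
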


In order to prove this proposition, we first prepare a few lemmas: 

\begin{lemma}
\label{lemma_R}
For any $s\in \R$, $0\le \delta\le1$ and $t\ge0$, 
$\|\mathcal R(t)f\|_{H^s}\lesssim t^\delta \|f\|_{H^{2\delta+s}}
$. 
\end{lemma}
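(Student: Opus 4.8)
The plan is to reduce everything to a pointwise Fourier multiplier estimate. Since $H_0=-\frac12\partial_x^2$ acts as multiplication by $\xi^2/2$ on the Fourier side, one has $\F[\mathcal R(t)f](\xi)=(e^{-it\xi^2/2}-1)\widehat f(\xi)$, so that by Plancherel's theorem
$$
\|\mathcal R(t)f\|_{H^s}^2=\int_\R \<\xi\>^{2s}\,\big|e^{-it\xi^2/2}-1\big|^2\,|\widehat f(\xi)|^2\,d\xi.
$$
Thus the whole lemma amounts to controlling the symbol $\big|e^{-it\xi^2/2}-1\big|$ by an appropriate power of $t\<\xi\>^2$.

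The key elementary inequality I would use is that, for every $\theta\in\R$ and $0\le\delta\le1$,
$$
|e^{i\theta}-1|\le 2^{1-\delta}|\theta|^\delta.
$$
This follows by interpolating the two trivial bounds $|e^{i\theta}-1|\le2$ and $|e^{i\theta}-1|=2|\sin(\theta/2)|\le|\theta|$: raising the first to the power $1-\delta$, the second to the power $\delta$, and multiplying. Applying it with $\theta=-t\xi^2/2$ gives
$$
\big|e^{-it\xi^2/2}-1\big|\lesssim t^\delta|\xi|^{2\delta}\lesssim t^\delta\<\xi\>^{2\delta},\quad t\ge0.
$$

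Substituting this bound into the Plancherel identity yields
$$
\|\mathcal R(t)f\|_{H^s}^2\lesssim t^{2\delta}\int_\R\<\xi\>^{2s+4\delta}|\widehat f(\xi)|^2\,d\xi=t^{2\delta}\|f\|_{H^{2\delta+s}}^2,
$$
and taking square roots gives the claim. There is no genuine obstacle here: the argument is essentially a one-line Plancherel computation once the interpolated multiplier bound is in hand, and the only point worth recording is that the regularity loss $H^{2\delta+s}$ arises precisely from pairing the quadratic symbol $\xi^2$ of $H_0$ with the $\delta$-th power coming from the interpolation inequality, while all implicit constants stay uniform in $s$.
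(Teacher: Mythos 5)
Your proof is correct and follows essentially the same route as the paper: the paper's proof consists precisely of writing $\mathcal R(t)=\F^{-1}(e^{-it|\xi|^2/2}-1)\F$ and invoking the multiplier bound $|e^{-it|\xi|^2/2}-1|\lesssim (t|\xi|^2)^\delta$ for $0\le\delta\le1$, which is exactly your Plancherel computation. Your only addition is to spell out the interpolation $|e^{i\theta}-1|\le 2^{1-\delta}|\theta|^\delta$ between the bounds $|e^{i\theta}-1|\le 2$ and $|e^{i\theta}-1|\le|\theta|$, which the paper leaves implicit.
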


\begin{proof}
Since  $\mathcal R(t)=e^{-itH_0}-I=\F^{-1}(e^{-it|\xi|^2/2}-1)\F$, the assertion follows from the bound $|e^{-it|\xi|^2/2}-1|\lesssim (t|\xi|^2)^\delta$ for $0\le \delta\le1$
\end{proof}

\begin{lemma}
\label{lemma_v_p}
Let $\gamma>0$, $0< s\le1$, $\nu>0$, $\varphi_1\in H^{\max\{s,1/2+\nu\}}(\R)$ and $\varphi_2\in H^s(\R)$. Then 
$$
\|e^{i\gamma \varphi_1}\varphi_2\|_{H^s}\lesssim \<\gamma\>\left(1+\|\varphi_1\|_{H^{\max\{s,1/2+\nu\}}}\right)\|\varphi_2\|_{H^{s}},
$$
where $\|\varphi_2\|_{H^{s}}$ is replaced by $\|\varphi_2\|_{H^{s+\nu}}$ when $s=1/2$. 
\end{lemma}

\begin{proof}
Assume $\gamma=1$ without loss of generality. The case $s=1$ follows by the embedding $L^\infty(\R)\subset H^1(\R)$. 
For $0<s<1$ we use the norm equivalence $\|f\|_{\dot H^s}\sim \|f\|_{\dot B_{2,2}^s}$ for $f\in H^s$ (see \cite{BeLo}), where
$$
\|f\|_{\dot B_{p,q}^\alpha}=\left(\int_0^\infty t^{-1-qs}\sup_{|y|\le t}\|\tau_yf-f\|_{L^p}^qdt\right)^{1/q} 
$$
and $\tau_yf(x)=f(x-y)$. Since 
\begin{align*}
\left|\tau_y[e^{i\varphi_1}\varphi_2]-e^{i\varphi_1}\varphi_2\right|
&\le \left|(e^{i\tau_y\varphi_1}-e^{i\varphi_1})\varphi_2\right|+|e^{i\tau_y\varphi_1}|\left|\tau_y\varphi_2-\varphi_2\right|\\
&\le \left|\tau_y\varphi_1-\varphi_1\right||\varphi_2|+\left|\tau_y\varphi_2-\varphi_2\right|,
\end{align*}
it holds for $1/2<s<1$ that
\begin{align*}
\|e^{i\varphi_1}\varphi_2\|_{\dot H^s}
\lesssim \|\varphi_1\|_{\dot H^s}\|\varphi_2\|_{L^\infty}+\|\varphi_2\|_{\dot H^s}
\lesssim \left(1+\|\varphi_1\|_{ H^s}\right)\|\varphi_2\|_{ H^s}.
\end{align*}
For $0<s<1/2$, 
using H\"older's inequality 
$
\|(\tau_y\varphi_1-\varphi_1)\varphi_2\|\le \|\tau_y\varphi_1-\varphi_1\|_{L^{\frac1s}}\|\varphi_2\|_{L^{\frac{2}{1-2s}}}
$ and the embeddings $ H^s\subset L^{\frac{2}{1-2s}}$ and $H^{1/2+\nu}\subset \dot B_{1/s,2}^s$ for $\nu>0$, we similarly have
\begin{align*}
\|e^{i\varphi_1}\varphi_2\|_{\dot H^s}
\lesssim \|\varphi_1\|_{\dot B_{1/s,2}^s}\|\varphi_2\|_{\dot H^s}+\|\varphi_2\|_{\dot H^s}
\lesssim \left(1+\|\varphi_1\|_{H^{1/2+\nu}}\right)\|\varphi_2\|_{ H^s}. 
\end{align*}
Finally, if $s=1/2$, then
$$
\|e^{i\varphi_1}\varphi_2\|_{\dot H^{1/2}}\lesssim \|\varphi_1\|_{\dot H^{1/2}}\|\varphi_2\|_{L^\infty}+\|\varphi_2\|_{\dot H^{1/2}}\lesssim (1+\|\varphi_1\|_{H^{1/2+\nu}})\|\varphi_2\|_{H^{1/2+\nu}}.
$$
These three estimates imply the desired result. 
\end{proof}

\begin{proof}[Proof of Proposition \ref{proposition_e_1}]
In what follows,  $C_s$ denotes constants depending on $\|\varphi\|_{H^s}$, but not on $t$, which may vary line to line. Set $\gamma_1=-\lambda_1\log |t|$, $\gamma_2=-\lambda_2t^{\sigma-1}/(\sigma-1)$, $\varphi_\sigma=e^{-i\gamma_2|\varphi|^{2\sigma}}\varphi$ and $\varphi_1=|\varphi|^{2}$. Then
\begin{align*}
v_{\mathrm p}=e^{i\gamma_1|\varphi|^2+i\gamma_2|\varphi|^{2\sigma}}\overline \varphi=e^{i\gamma_1\varphi_1}\overline{\varphi_\sigma},\quad
\partial_x v_{\mathrm p}=e^{i\gamma_1\varphi_1}(i\gamma_1\overline{\varphi_\sigma}\nabla\varphi_1+\nabla \overline{\varphi_\sigma}).
\end{align*}
Let $0\le \delta\le1$ and $\nu>0$. Since $|\gamma_2|\lesssim1$ on $(0,1]$ for $\sigma>1$, Lemmas \ref{lemma_R} and \ref{lemma_v_p} with $\varphi_2=\overline{\varphi_\sigma}$ for $\delta_2<1/2$ and $\varphi_2=(i\gamma_1\overline{\varphi_\sigma}\nabla\varphi_1+\nabla \overline{\varphi_\sigma})$ for $1/2\le \delta_2\le1$ imply
\begin{align*}
\|e_1(t)\|\lesssim t^{\delta}\|v_{\mathrm p}(t)\|_{H^{2\delta}}\le C_{\max\{2\delta,1/2+\nu\}}t^{\delta-\nu}
\end{align*} 
Similarly, we also obtain for any $0\le \delta_1\le 1/2$
\begin{align*}
\|\partial_xe_1(t)\|\lesssim t^{\delta_1}\|\partial_x v_{\mathrm p}(t)\|_{H^{2\delta_1}}\le C_{2\delta_1+1}t^{\delta_1-\nu}.
\end{align*} 
These two estimates imply\begin{align*}
\sqrt{Q_\alpha[e_1](t)}
\lesssim \|\partial_x  e_1(t)\|+t^{-\alpha/2}\|e_1(t)\|+t^{-1/2}\|e_1(t)\|
\le C_{\max\{2\delta,1\}}t^{\delta-\max\{1/2,\alpha/2\}-\nu}
\end{align*}
for any $1/2\le \delta\le1$, where we chose $\delta_1=\delta-1/2$.  
Recalling the formula
$$
e_2
=\sum_{j=1}^2\lambda_j t^{\sigma_j-2}\left\{-\mathcal R|v_\mathrm p|^{2\sigma_j}v_\mathrm p+(\sigma_j+1)|\varphi|^{2\sigma_j}e_1+ \sigma_j |\varphi|^{2\sigma_j-2}v_{\mathrm p},^2\overline{e_1}\right\}$$
we obtain the desired bound for $e_2$ by the same argument based on Lemmas \ref{lemma_R} and \ref{lemma_v_p}. 
\end{proof}

With Propositions \ref{proposition_G} and \ref{proposition_e_1} at hand, we are ready to complete the proof of Theorem \ref{theorem_1}.

\begin{proof}[Proof of Theorem \ref{theorem_1}]
Suppose $2/3\le \alpha<1$ and $\vec v_*,\vec v_1,\vec v_2\in \mathcal X(T,\alpha,\beta,M)$. Then \eqref{proposition_G_1} implies
\begin{align}
\sup_{0<t\le T}t^{-\beta}\int_0^t \sqrt{Q_\alpha[iG[v_*]](s)}ds
\label{G_1}
\lesssim T^{\beta+3\alpha /4-1/2}\<M\>^{2\sigma+1}\lesssim T^\beta \<M\>^{2\sigma+1}. 
\end{align}
Moreover, it follows from Proposition \ref{proposition_e_1} that
\begin{align*}
\sup_{0<t\le T} t^{-\beta} \sqrt{Q_\alpha[e_1](t)}+\sup_{0<t\le T}t^{-\beta} \int_0^t \sqrt{Q_\alpha[ie_2](s)}ds\lesssim T^{-\beta+\delta-1/2-\nu}
\end{align*}
as long as $1/2<\delta\le1$, $\nu>0$ and $\varphi\in H^{2\delta}$. Thus, if $\varphi\in H^{1+\ep}$ with some $\ep>0$, $0<\beta<\min\{\ep/2,1/2\}$ and $0<\nu<\ep/2-\beta$, then
\begin{align}
\|\Phi[\vec v_*]\|_{\mathcal X}
\lesssim T^{\delta_0}\<M\>^{2\sigma+1}
\label{Theorem_1_proof_1}
\end{align}
with $\delta_0=\min\{\beta,\ep/2-\beta-\nu\}$. 
Since the error terms $\vec e_1,\vec e_2$ do not appear in the difference $\Phi[\vec v_1]-\Phi[\vec v_2]$,  the same argument based on the estimate \eqref{proposition_G_2} shows
\begin{align}
\|\Phi[\vec v_1]-\Phi[\vec v_2]\|_{\mathcal X}
\label{Theorem_1_proof_2}
\lesssim T^{\beta}\<M\>^{2\sigma}\|\vec v_1-\vec v_2\|_{\mathcal X}.
\end{align}
Therefore, for any $M>0$ there exists $T_M>0$ such that $\Phi$ is a contraction on $\mathcal X(T,\alpha,\beta,M)$ for any $0<T\le T_M$ and there exists a unique solution $\vec v_*=(v_*,\overline{v_*})^{\mathrm T}\in C((0,T];\mathcal H^1(\R))$ to \eqref{integral_equation} satisfying the asymptotic condition
\begin{align}
\label{Theorem_1_proof_3}
\|\partial_x v_*(t)\|+t^{-\alpha/2}\|v_*(t)\|\lesssim t^\beta,\quad t\to +0.
\end{align}
By Lemma \ref{lemma_propagator}, $v_*$ satisfies \eqref{equation_v_1} in $H^{-1}$ which, together with \eqref{v_p}, shows that $v:=v_*+v_{\mathrm p}$ solves \eqref{v} in $H^{-1}$ and the following Duhamel formula in $H^1$: 
\begin{align}
\label{Duhamel}
v(t)=e^{-i(t-T)H_0}v(T)-i \int_{T}^t e^{-i(t-s)H_0}\left(\lambda_1 s^{-1}|v|^2v+\lambda_2 s^{\sigma-2}|v|^{2\sigma}v\right)ds,\quad 0<t\le T.
\end{align}
Conversely, for any solution $v\in C((0,T];H^1(\R))$ to \eqref{Duhamel} with a given initial datum $v(T)\in H^1$ satisfying \eqref{Theorem_1_proof_3} with some $2/3\le \alpha<1$ and $0<\beta<\min\{\ep/2,1/2\}$, $\vec v_*:=\vec v-\vec v_{\mathrm p}$ solves \eqref{equation_v_1} in $H^{-1}$ and \eqref{integral_equation} in $H^1$. 

Next, we prove the uniqueness of $v$. Let $v_j\in C((0,T_0];H^1(\R))$ for $j=1,2$ be two solutions to \eqref{v} satisfying \eqref{Theorem_1_proof_3} with some $2/3\le\alpha_1,\alpha_2<1$ and $\beta_1,\beta_2>0$, respectively. Let $\alpha_0=\min\{\alpha_1,\alpha_2\}$ and $\beta_0=\min\{\beta_1,\beta_2\}$. Then there exists $M_0>0$ such that,  for any $0<T\le T_0$, 
$$
\|\partial_xv_j(t)-\partial_xv_{\mathrm p}(t)\|+t^{-\alpha_0/2}\|v_j(t)-v_{\mathrm p}(t)\|\le M_0 t^{\beta_0},\quad 0<t\le T. 
$$
Moreover, by the same argument as above, $v_j$ satisfy \eqref{Duhamel} and hence
$$
\vec v_1(t)-\vec v_2(t)=-\int_0^t \mathcal U(t,s)\left\{\vv{ (i G)}[v_1](s)-\vv{( iG)}[v_2](s)\right\}ds,\quad 0<t\le T,
$$
where $\vec v_j=(v_j,\overline{v_j})^{\mathrm T}$. The same argument as that for showing \eqref{Theorem_1_proof_2} then implies
$$
\|\vec v_1-\vec v_2\|_{\mathcal X} \lesssim T^{\beta_0} \<M_0\>^{2\sigma}\|\vec v_1-\vec v_2\|_{\mathcal X}. 
$$
This shows $v_1(t)=v_2(t)$ for $0<t\le T$ with sufficiently small $T$ and hence $v_1\equiv v_2$ by the well-posedness of the Cauchy problem for \eqref{v} in $(0,T_0]$ (see \cite[Theorem 4.11.1]{Cazenave}). Therefore, the above $v\in C((0,T];H^1(\R))$ is a unique solution to \eqref{v} satisfying \eqref{Theorem_1_proof_3}. Note that since the above equation for $\vec v_1(t)-\vec v_2(t)$ does not have error terms $\vec e_1,\vec e_2$, this argument for showing the uniqueness works well by assuming $\varphi \in H^{1}(\R)$ only. 

Finally, we translate these results to the original NLS \eqref{NLS}. Let $u$ be the inverse pseudo-conformal transform of $v$ defined by
$
u=\mathcal M(t)\mathcal D(t)\mathcal T^{-1}\overline v
$. By the above properties for $v$ and the equivalence of \eqref{theorem_1_1} and \eqref{v-v_p}, $u\in C([T^{-1},\infty);L^2(\R))$ is a unique solution to \eqref{NLS} satisfying $e^{itH_0}u\in C([T^{-1},\infty);\F H^1(\R))$ and \eqref{theorem_1_1}. 
By \eqref{w_p-widetilde_w_p_1} and \eqref{w_p-widetilde_w_p_2}, $u$ also satisfies 
\eqref{theorem_1_2} if in addition $\alpha/2<\sigma-1$. Since the Cauchy problem for \eqref{NLS} is globally well-posed in $L^2(\R)$ if $0<\sigma<2$ (\cite{Tsutsumi_1987}), $u$ can be extended uniquely backward in time from time $t=T^{-1}$, satisfying $u\in C(\R;L^2(\R))$. Moreover, we have $e^{itH_0}u\in C(\R;\F H^1(\R))$ thanks to $e^{iT^{-1}H_0}u(T^{-1})\in \F H^1(\R)$  and the persistence of the $\mathcal F H^1$-regularity for $0<\sigma<2$ (see e.g. \cite[Proposition 2.2]{Masaki_2017} where a simple proof for the case $\lambda_1<0$ and $\lambda_2=0$ can be found and the same proof also works well for $\lambda_1>0$ and $\lambda_2\neq0$). The modified wave operator 
$
W_+:\mathcal F H^{1+\ep}(\R)\ni \F^{-1} \varphi\mapsto u(0)\in \mathcal F H^1(\R)
$ thus  
is well-defined. This completes the proof. 
\end{proof}

\section{Proof of Theorem \ref{theorem_2}}
\label{section_proof_Theorem_2}
The basic strategy of the proof of Theorem \ref{theorem_2} is almost the same as that of Theorem \ref{theorem_1}, namely we want to construct the solution $\vec v_*$ to \eqref{integral_equation}. However, we used the condition $\lambda_1>0$ in an essential way to prove Lemma \ref{lemma_energy_estimate}. Instead, we use the following

\begin{lemma}
\label{lemma_Theorem_2_1}
Let $\varphi\in H^1(\R)$ and $\max\{1,2|\lambda_1|\|\varphi\|_{L^\infty(\R)}^2\}<\alpha<2$. Then, there exists $t_0>0$ such that for all $\psi_0\in H^1(\R)$ and $0<s\le t\le t_0$
$$
Q_\alpha[\mathcal U(t,s)\vec \psi_0](t)\lesssim Q_\alpha[\psi_0](s).
$$
\end{lemma}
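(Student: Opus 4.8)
The plan is to run the proof of Lemma~\ref{lemma_energy_estimate} and Corollary~\ref{corollary_energy_estimate} almost verbatim, replacing only the single step where the defocusing sign $\lambda_1>0$ was used. By Lemma~\ref{lemma_propagator}~(4) and $Q[\mathcal J\vec\psi]=Q[\psi]+Q[\overline\psi]$, it suffices to establish $Q[\psi](t)\lesssim Q[\psi](s)$ for every $H^1$-solution $\psi$ of \eqref{equation_psi_1} (the bound for $\overline\psi$ is identical). As in Lemma~\ref{lemma_energy_estimate}, the estimate is trivial on a fixed compact range $t_0\le s\le t\le1$ by the boundedness of $\mathcal U(t,s)$ on $\mathcal H^1$, so I reduce to $0<s\le t\le t_0$ with $t_0$ small. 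The starting point is the modified energy identity \eqref{lemma_energy_identity_2}, which is valid for \emph{any} sign of $\lambda_1$; the obstruction is that the term $-\lambda_1(2-\sigma_1)t^{-2}D_1[\psi]$, which was the decisive negative term for $\lambda_1>0$, now has the wrong sign, so all dissipation must be extracted from $-\alpha t^{-\alpha-1}\|\psi\|^2$.

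First I would record a sign-independent coercivity. Since $|v_{\mathrm p}|=|\varphi|$ gives $D_1[\psi]\le 2\||\varphi|\psi\|^2\le2\|\varphi\|_{L^\infty}^2\|\psi\|^2$ and $t^{-1}=t^{\alpha-1}t^{-\alpha}$ with $\alpha>1$, the indefinite term $\lambda_1t^{-1}D_1[\psi]$ is $o(t^{-\alpha}\|\psi\|^2)$ as $t\to+0$; hence $Q[\psi]\sim\frac14\|\partial_x\psi\|^2+t^{-\alpha}\|\psi\|^2\sim Q_1[\psi]$ for small $t$ (the $\lambda_2$-term being likewise lower order because $\alpha>2-\sigma$), with constants independent of $t$. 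In particular $\|\partial_x\psi\|^2\lesssim Q_1[\psi]$ and $t^{-\alpha}\|\psi\|^2\lesssim Q_1[\psi]$, and these bounds hold irrespective of the sign of $\lambda_1$.

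The core is the differential inequality for $Q_1$. In \eqref{lemma_energy_identity_2} the \emph{only} term competing with $-\alpha t^{-\alpha-1}\|\psi\|^2$ at the critical rate $t^{-\alpha-1}\|\psi\|^2$ is the $j=1$ cross term $2\lambda_1t^{-1-\alpha}\Im\langle v_{\mathrm p}^2\overline\psi,\psi\rangle$, which, again using $|v_{\mathrm p}|=|\varphi|$, is bounded by $2|\lambda_1|\|\varphi\|_{L^\infty}^2t^{-\alpha-1}\|\psi\|^2$. Because $\alpha>2|\lambda_1|\|\varphi\|_{L^\infty}^2$, these two combine into $-\kappa t^{-\alpha-1}\|\psi\|^2$ with $\kappa:=\alpha-2|\lambda_1|\|\varphi\|_{L^\infty}^2>0$. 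Every remaining term carries a strictly less singular power of $t$: the now-unfavorable $|\lambda_1|t^{-2}D_1[\psi]\lesssim t^{\alpha-1}\,t^{-\alpha-1}\|\psi\|^2$ and the $\lambda_2$-contributions gain a positive power of $t$ (since $\alpha>1$ and $\sigma>1$), so for $t$ small they are all dominated by $\frac\kappa2 t^{-\alpha-1}\|\psi\|^2$. Finally, the gradient cross terms $\lambda_j\sigma_jt^{\sigma_j-2}\Im\langle\partial_x\psi,|\varphi|^{2\sigma_j-2}\Re[\varphi\overline{\partial_x\varphi}]\psi\rangle$ are treated by Gagliardo--Nirenberg \eqref{GN}: the $j=1$ term is $\lesssim t^{-1}\|\partial_x\psi\|^{3/2}\|\psi\|^{1/2}$, and writing $\|\psi\|^{1/2}=t^{\alpha/4}(t^{-\alpha}\|\psi\|^2)^{1/4}$ and applying Young, it is $\lesssim t^{\alpha/4-1}\big(\|\partial_x\psi\|^2\big)^{3/4}\big(t^{-\alpha}\|\psi\|^2\big)^{1/4}\lesssim t^{\alpha/4-1}Q_1[\psi]$ by coercivity (the $j=2$ term is analogous with a better power). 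Collecting these bounds gives $\frac{d}{dt}Q_1[\psi]\le Ct^{\alpha/4-1}Q_1[\psi]$, whence $Q_1[\psi](t)\lesssim Q_1[\psi](s)$ by Gronwall and $Q[\psi](t)\lesssim Q[\psi](s)$ by $Q\sim Q_1$.

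The main obstacle is precisely this sharp accounting, which pins down the hypotheses. A routine Young split of the gradient cross terms, as used for $\lambda_1>0$ in Lemma~\ref{lemma_energy_estimate}, would transfer a fixed multiple of $t^{-\alpha-1}\|\psi\|^2$ onto the $\|\psi\|^2$-budget and thus force the suboptimal condition $\alpha>4|\lambda_1|\|\varphi\|_{L^\infty}^2$. The resolution is to (i) recognize the $j=1$ cross term as the \emph{unique} critical-rate competitor, (ii) absorb the now-bad $t^{-2}D_1[\psi]$ term as a genuinely lower-order perturbation, which is where $\alpha>1$ enters, and (iii) bound the gradient cross terms entirely inside $Q_1$ via coercivity, spending none of the $\|\psi\|^2$-budget, so that the full coefficient $\alpha$ is available to dominate $2|\lambda_1|\|\varphi\|_{L^\infty}^2$.
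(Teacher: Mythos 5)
Your proposal is correct and takes essentially the same route as the paper's own proof: both start from the modified energy identity \eqref{lemma_energy_identity_2}, note $Q\sim Q_1\sim\frac14\|\partial_x\psi\|^2+t^{-\alpha}\|\psi\|^2$ for small $t$, identify the $j=1$ cross term $2\lambda_1 t^{-\alpha-1}\Im\langle v_{\mathrm p}^2\overline\psi,\psi\rangle$ as the unique critical-rate competitor beaten by $\alpha>2|\lambda_1|\|\varphi\|_{L^\infty}^2$, absorb the sign-flipped $|\lambda_1|t^{-2}D_1[\psi]$ term as lower order using $\alpha>1$, and conclude by Gronwall. The only, immaterial, deviation is your interpolation bound $t^{\alpha/4-1}Q_1$ for the gradient cross terms, whereas the paper reuses the $\delta$-tuned Young split of Lemma \ref{lemma_energy_estimate} with an adjustable small coefficient $\ep$ on $t^{-\alpha-1}\|\psi\|^2$ (so, contrary to your closing remark, that split does \emph{not} force $\alpha>4|\lambda_1|\|\varphi\|_{L^\infty}^2$, since the transferred coefficient can be made arbitrarily small at the price of a larger harmless constant on the $t^{\alpha/3-1}\|\partial_x\psi\|^2$ piece), arriving at $t^{\alpha/3-1}Q_1$ instead of your $t^{\alpha/4-1}Q_1$.
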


\begin{proof}
Thanks to Lemma \ref{lemma_energy_identity}, it is enough to show 
$
Q_\alpha[\psi](t)\lesssim Q_\alpha[\psi](s)
$ for the solution $\psi\in C((0,\infty);H^1(\R))$ to \eqref{equation_psi_1}. Note that both 
$
Q_\alpha[\psi]
$ and $\widetilde Q_\alpha[\psi]$ are comparable to $\frac14\|\partial_x \psi\|^2+t^{-\alpha}\|\psi\|^2$ for sufficiently small $t>0$ under the above assumption. Set $\Lambda=2|\lambda_1|\|\varphi\|_{L^\infty(\R)}^2$ for short. The identity \eqref{lemma_energy_identity_2} and the same argument as in the proof of Lemma \ref{lemma_energy_estimate} show that there exist $C,\ep>0$ with $\alpha-\Lambda-\ep>0$ such that 
\begin{align*}
\frac{d}{dt}\widetilde Q_\alpha[\psi]
&\le -(\alpha-\Lambda-\ep)t^{-\alpha-1}\|\psi\|^2+(\Lambda+\ep) t^{-2}\|\psi\|^2\\
&\quad 
+t^{-1}|\lambda_1|\|\varphi\|_{L^\infty}\|\partial_x \varphi\|\|\partial_x \psi\|\|\psi\|_{L^\infty}
+t^{\sigma-2}|\lambda_2|\|\varphi\|_{L^\infty}^{2\sigma-1}\|\partial_x \varphi\|\|\partial_x \psi\|\|\psi\|_{L^\infty}
\\
&\le -\frac{(\alpha-\Lambda-\ep)t^{-\alpha-1}\|\psi\|^2}{2}+(\Lambda+\ep) t^{-2}\|\psi\|^2+\frac{Ct^{\alpha/3-1}\|\partial_x\psi\|^2}{4}\\
&\le Ct^{\alpha/3-1}\widetilde Q_\alpha[\psi]
\end{align*}
for all $0<t\le t_0$ with sufficiently small $t_0>0$ so that $(\alpha-\Lambda-\ep)t_0^{-\alpha+1}>2(\Lambda+\ep)$. This implies the desired estimate. \end{proof}

\begin{proof}[Proof of Theorem \ref{theorem_2}]
This lemma and Proposition \ref{proposition_G} imply that \eqref{G_1} and \eqref{Theorem_1_proof_2} still hold for $0<t\le t_0$ in the present setting. Moreover, it follows from Proposition \ref{proposition_e_1} and this lemma that, for any $0<T\le t_0$, 
\begin{align*}
\sup_{0<t\le T} t^{-\beta} \sqrt{Q_\alpha[e_1](t)}+\sup_{0<t\le T}t^{-\beta} \int_0^t \sqrt{Q_\alpha[ie_2](s)}ds\lesssim T^{-\beta+\delta-\alpha/2-\nu}. 
\end{align*}
Since $\alpha<2$ and $0<\beta<1-\alpha/2$, one can choose $0\le \delta\le1$ and $\nu>0$ so that $-\beta+\delta-\alpha/2-\nu>0$. The remaining part of the proof is completely the same as that for Theorem \ref{theorem_1}.  
\end{proof}

\section*{Acknowledgments}
M. K. is partially supported by JSPS KAKENHI Grant Number JP24K06796. H. M. is partially supported by JSPS KAKENHI Grant Numbers JP21K03325 and JP24K00529. This work was supported by the Research Institute for Mathematical Sciences, an International Joint Usage/Research Center located in Kyoto University.


\end{document}